\newtheorem{theorem}{Theorem}[section]
\newtheorem{lemma}[theorem]{Lemma}
\newenvironment{proofof}[1]{\noindent{\bf Proof of #1:}}{\qed\medskip}
\renewcommand*\proofname{Proof}
\renewenvironment{proof}[1][\proofname]{ \par
\pushQED{\qed}%
\normalfont \topsep0em\@plus0em\relax 
\trivlist
\item[ \hskip0.1em 
\bfseries 
#1\@addpunct{:}]\normalfont 
}{%
\popQED\endtrivlist\@endpefalse
}
\newcommand{\mycdot}{\,\cdot\,}
\newcommand{\Leq}{\leq {}}
\newcommand{\Geq}{\geq {}}
\newcommand{\set}[2]{\lbrace \,{#1}\,|\,{#2}\,\rbrace}
\newcommand{\R}{\mathbb{R}}
\newcommand{\F}{\mathcal{F}}
\newcommand{\E}{\mathbb{E}}
\newcommand{\dx}[1]{\hspace*{0.25ex}d\hspace*{-0.15ex}#1}
\newcommand{\dxy}[2]{\dx{#1}(#2)}
\renewcommand{\a}{\alpha}
\renewcommand{\b}{\beta}
\newcommand{\g}{\gamma}
\renewcommand{\d}{\delta}
\newcommand{\D}{\Delta}
\newcommand{\e}{\varepsilon}
\newcommand{\z}{\zeta}
\newcommand{\n}{\eta}
\renewcommand{\k}{\kappa}
\newcommand{\lb}{\lambda}
\renewcommand{\t}{\tau}
\renewcommand{\th}{\theta}
\newcommand{\argmin}[1]{\underset{#1}{\operatorname{arg\,min}}}
\newcommand{\eins}{\boldsymbol{1}}
\newcommand{\inorm}[1]{\Vert #1 \Vert_\infty}
\newcommand{\RP}[2]{{{\cal R}_{#1,P}(#2)}}
\newcommand{\RPB}[1]{{{\cal R}_{#1,P}^{*}}}
\newcommand{\RPxB}[2]{{{\cal R}_{#1,P,#2}^*}}
\newcommand{\Rx}[3]{{{\cal R}_{#1,#2}(#3)}}
\newcommand{\lclass}{{{L}_{\mathrm{class}}}}
\begin{document}

\begin{frontmatter}

\title{Improved Classification Rates under Refined Margin Conditions}


\begin{aug}
\author{\fnms{Ingrid} \snm{Blaschzyk}\ead[label=e1]{ingrid.blaschzyk@mathematik.uni-stuttgart.de, ingo.steinwart@mathematik.uni-stuttgart.de}}
\and
\author{\fnms{Ingo} \snm{Steinwart}\ead[label=e2]{ingo.steinwart@mathematik.uni-stuttgart.de}}
\address{Institute for Stochastics and Applications\\University of Stuttgart\\Pfaffenwaldring 57\\ D-70569 Stuttgart \\ \printead{e1}}
\runauthor{I. Blaschzyk and I. Steinwart}
\runtitle{Classification Rates under Refined Margin Conditions}
\end{aug}

\begin{abstract}
In this paper we present a simple partitioning based technique to refine the statistical analysis of classification algorithms. The core of our idea is to divide the input space into two parts such that the first part contains a suitable vicinity around the decision boundary, while the second part is sufficiently far away from the decision boundary. Using a set of margin conditions we are then able to control the classification error on both parts separately. By balancing out these two error terms we obtain a refined error analysis in a final step. We apply this general idea to the histogram rule and show that even for this simple method we obtain, under certain assumptions, better rates than the ones known for support vector machines, for certain plug-in classifiers, and for a recently analyzed tree based adaptive-partitioning ansatz. Moreover, we show that a  margin condition which sets the critical noise in relation to the
decision boundary makes it possible to improve the optimal
rates proven for distributions without this margin condition.
\end{abstract}
%
%


\tableofcontents

\end{frontmatter}

\section{Introduction}
Given a dataset $D:=((x_1,y_1),\ldots,(x_n,y_n))$ of observations drawn in an i.i.d.\ fashion from a probability measure $P$ on $X \times Y$, where $X \subset \mathbb{R}^d$ and $Y:=\lbrace -1,1 \rbrace$, the learning goal of binary classification is to find a decision function $f_D \colon X \to  \lbrace{-1,1\rbrace}$ such that for new data $(x,y)$ we have $f_D(x)=y$ with high probability.

The problem of classification is, apart from regression, one of the most considered problems in learning theory and many classical learning methods have been presented in the literature such as histogram rules, nearest neighbor methods or moving window rules. A general reference for these methods is \cite{DeGyLu96}. 
Several more recent methods use trees to build a classifier, for example the random forest algorithm, introduced in \cite{Breiman01}, makes a prediction by a majority vote over a collection of random forest trees. 
Another example is the tree based adaptive-partitioning algorithm, presented in \cite{BiCoDaDe14}. Here, a classifier is picked by empirical risk minimization over a nested sequence $(S_m)_{m \geq 1}$ of families of sets which is based on dyadic or decorated tree partitions. Examples of non-tree based algorithms are described in \citep{AuTs07} and \citep{KoKr07}. There, the final classifier is found by empirical risk minimization over a suitable grid of plug-in rules or is derived by plug-in kernel, partitioning or nearest neighbor classification rules. Another non-tree based algorithm is, for example, the support vector machine (SVM), which solves a regularized empirical risk minimization problem over a reproducing kernel Hilbert space $H$. For more details on statistical properties of SVM for classification we refer the reader to \cite[Chapter~8]{StCh08}. 

In this paper we discuss a partitioning based technique to analyse the statistical properties of classification algorithms. In particular we show for the histogram rule that under certain assumptions this technique leads to rates, which are faster than the rates obtained in \cite{AuTs07, BiCoDaDe14, KoKr07}, and \cite{StCh08}. To be more precise, we divide the input space $X$ into two overlapping regions that are adjustable by a parameter $r$ in such a way that one set, which we will denote by $N_r$, contains points near the decision boundary, whereas the other set $F_r$ contains those that are sufficiently far away from the decision boundary. We examine the excess risks over these two sets separately by applying an oracle inequality for empirical risk minimizers on both parts. It turns out that we have no approximation error on $F_r$ and that we obtain, under a suitable assumption which relates critical noise to the decision boundary, an optimal variance bound on $F_r$, which in turn leads to an $\mathcal{O}(n^{-1})$ behavior of the excess risk on $F_r$. However, this bound still depends on the parameter $r$, namely it increases for $r \rightarrow 0$. In contrast, our bound on the risk on $N_r$ decreases for $r \rightarrow 0$. By balancing out these two risks with respect to $r$ we obtain a refined bound on $X$ under additional assumptions describing the concentration of mass around the decision boundary.

A more detailed discussion on this technique and the statistical results, which include rate adaptivity, are presented in Section \ref{section_main}. Moreover, a comparison of the resulting learning rates to the ones known for the SVM, for certain plug-in classification rules and the tree based adaptive-partitioning algorithm described in \cite{BiCoDaDe14} can be found at the end of Section \ref{section_comparison}. In particular, we show that the above mentioned assumption that relates the location of critical noise to the decision boundary has an essential influence on our learning rates such that we outperform under a common set of assumptions the optimal rates obtained for the classifier in \citep{AuTs07}. Furthermore, we show that if we omit the latter assumption, we obtain exactly the optimal rate of \citep{AuTs07}. We note that all proofs are deferred to Section \ref{section_proofs}.

\section{General Assumptions}\label{section_assump}

To describe our learning goal we consider in the following the classification loss $L:=\lclass: Y \times \mathbb{R} \to [0,\infty)$, defined by $L(y,t):=\eins_{(-\infty,0] }(y \cdot \text{sign} t)$ for $y\in Y, t \in \R$, where $\eins_{(-\infty,0]}$ denotes the indicator function on $(-\infty,0]$. We define the risk of a measurable estimator $f: X \to \mathbb{R}$ by
\begin{align*}
\Rx{L}{P}{f}:=\int_{X \times Y} L(y,f(x)) \,dP(x,y)
\end{align*}
and the empirical risk by
\begin{align*}
\Rx{L}{D}{f}:=\frac{1}{n} \sum_{i=1}^n L(y_i,f(x_i)),
\end{align*}
where $D:=\frac{1}{n} \sum_{i=1}^n \d_{(x_i,y_i)}$ denotes the average of Dirac measures  $\d_{(x_i,y_i)}$ at $(x_i,y_i)$. The smallest possible risk 
\begin{align*}
\RPB{L}:=\inf_{f \colon X \to \mathbb{R}}\Rx{L}{P}{f}
\end{align*}
is called the Bayes risk, and a measurable function $f^{\ast}_{L,P} \colon X \to \mathbb{R}$ so that
$\Rx{L}{P}{f^{\ast}_{L,P}}=\RPB{L}$ holds is called Bayes decision function. Recall that the Bayes decision function $f^{\ast}_{L,P}$ for the classification loss is given by $\text{sign}(2P(y=1|x)-1)$ for $x \in X$, where $P(\,\cdot \,|x)$ is a regular conditional probability on $Y$ given $x$.\\
Let us now briefly describe a particular histogram rule. To this end, let $\mathcal{A}=(A_j)_{j \geq 1}$ be a partition of $\mathbb{R}^d$ into cubes of side length $s \in (0,1]$ and $X:=\left[-1,1 \right]^d$. For $x \in X$ we denote by $A(x)$ the unique cell of $\mathcal{A}$ with $x \in A(x)$ and call the map $h_{P,s} \colon X \to Y$ defined by
\begin{align}\label{def_hist}
h_{P,s}(x)  :=\begin{cases}
  -1  & \text{if}\, f_{P,s}(x)<0,\\
  1  & \text{if}\, f_{P,s}(x)\geq 0,
\end{cases}
\end{align}
where $f_{P,s}(x):=P(A(x)\times \lbrace 1 \rbrace )-P(A(x) \times \lbrace -1 \rbrace )$, infinite sample histogram rule. For a dataset $D$ we further write
\begin{align*}
f_{D,s}(x):=\frac{1}{n}\sum_{i=1}^n \eins_{\lbrace y_i=+1 \rbrace} \eins_{A(x)}(x_i)-\frac{1}{n}\sum_{i=1}^n \eins_{\lbrace y_i=-1 \rbrace} \eins_{A(x)}(x_i).
\end{align*}
Thus, the empirical histogram is defined by $h_{D,s}:=\text{sign}f_{D,s}$. We define the set $\F$ by
\begin{align*}
\F:=\left\lbrace \,\sum_{A_j \cap \left[-1,1 \right]^d \neq \emptyset}c_j\eins_{A_j} \colon c_j\in \lbrace -1,1 \rbrace  \, \right\rbrace.
\end{align*}
Then, it is easy to show that the empirical histogram rule $h_{D,s}$ is an empirical risk minimizer over $\F$ for the classification loss, that means
\begin{align*}
\Rx{L}{D}{h_{D,s}}=\inf_{f\in \F}\Rx{L}{D}{f}.
\end{align*}
Since we aim in a further step to examine the risk on subsets of $X$ consisting of cells, we have to specify the loss on those subsets. Therefore, we define for an arbitrary index set $J \subset \lbrace \,1,\ldots,m \,\rbrace$ the set 
\begin{align}\label{def_set_T}
T_J:= \bigcup_{j\in J} A_j
\end{align}
and the related loss $L_{T_J}: X \times Y \times \mathbb{R} \rightarrow [0, \infty)$ by
\begin{align}\label{loss_A}
L_{T_J}(x,y,t):=\eins_{\bigcup_{j\in J} A_j}(x)\lclass(y,t).
\end{align}
Furthermore, we define the risk over $T_J$ by
\begin{align*}
\Rx{L_{T_J}}{P}{f}:=\int_{X \times Y} L_{T_J}(x,y,f(x))  \,dP(x,y)
\end{align*}
and define the shortcut $L_{T_J} \circ f:=L_{T_J}(x,y,f(x))$.

We denote by $P^n$ the product measure of the probability measure $P$. As mentioned in the introduction, we have to make assumptions on $P$ to obtain rates. Therefore, we recall some notions from \cite[Chapter 8]{StCh08} which describe the behavior of $P$ in the vicinity of the decision boundary. To this end, let $\n \colon X \to [0,1]$, defined by $\n(x):=P(y=1|x)$ for $x\in X$, be a version of the posterior probability of $P$, that is, that the probability measures $P(\,\cdot \,|x)$ form a regular conditional probability of P. Clearly, if we have $\n(x)=0$ resp.\ $\n(x)=1$ for $x \in X$ we observe the label $y=-1$ resp. $y=1$ with probability $1$. Otherwise, if, e.g., $\n(x)\in [1/2,1)$  we observe the label $y=-1$ with the probability $1-\n(x) \in (0,1/2]$ and we call the latter probability noise. Obviously, in the worst case this probability equals $1/2$ and we define the set containing those $x \in X$ by $X_{0} := \lbrace\,x \in X \colon \n(x)=1/2 \,\rbrace$. Furthermore, we write
\begin{align*}
X_{1} &:= \lbrace\,x \in X \colon \n(x)>1/2 \,\rbrace,\\
X_{-1} &:=  \lbrace\, x \in X \colon \n(x)<1/2 \,\rbrace.
\end{align*}
Then, the function $\D_{\n} \colon X \to [0,\infty]$ defined by
\begin{align}\label{delta}
\begin{split}
\D_{\n}(x):=\begin{cases}
 d(x,X_1)  & \text{if}\, x \in X_{-1},\\
  d(x,X_{-1})  & \text{if}\, x \in X_{1},\\
  0 & \text{otherwise},
\end{cases}
\end{split}
\end{align}
where $d(x,A):=\inf_{x' \in A}d(x,x')$, is called distance to the decision boundary. This helps us to describe the mass of the marginal distribution $P_X$ of $P$ around the decision boundary by the following exponents. We say that $P$ has margin exponent (ME) $\a \in (0,\infty]$ if there exists a constant $c_{\text{ME}}>0$ such that
\begin{align*}
P_X(\lbrace \D_{\n}(x)<t\rbrace) \leq (c_{\text{ME}}t)^{\a}
\end{align*}
for all $t>0$. Descriptively, the ME $\a$ measures the amount of mass close to the decision boundary. Therefore, large values of $\a$ are better since they reflect a low concentration of mass in this region, which makes the classification easier. Furthermore, we say that $P$ has margin-noise exponent (MNE) $\b \in (0,\infty]$ if there exists a constant $c_{\text{MNE}} > 0$ such that
\begin{align*}
\int_{\lbrace \D_{\n}<t \rbrace} |2\n(x)-1| \,dP_X(x) \leq (c_{\text{MNE}}t)^{\beta}
\end{align*}
for all $t>0$. The MNE $\b$ measures the mass and the noise, that means the amount of points $x \in X$ with $\n(x)\approx 1/2$, around the decision boundary. That is, we have high MNE $\b$ if we have low mass and/or high noise around the decision boundary. Next, we say that the distance to the decision boundary $\D_{\n}$ controls the noise from below by the exponent $\g$ if there exist a $\g \in [0,\infty)$ and a constant $c_{\text{LC}}>0$ with 
\begin{align}\label{def_lc}
\D_{\n}^{\g}(x) \leq c_{\text{LC}}|2\n(x) -1|
\end{align}
for $P_X$-almost all $x \in X$. That means, if $\n(x)$ is close to $1/2$ for some $x \in X$, this $x$ is close to the decision boundary. For examples of typical values of these exponents and relations between them we refer the reader to \cite[Chapter~8]{StCh08}.

Finally, in order to describe the region of the decision boundary in a more geometrical way, we say according to \cite[3.2.14(1)]{Federer69} that a general set $T \subset X$ is $m$-rectifiable for an integer $m>0$ if there exists a Lipschitzian function mapping some bounded subset of $\mathbb{R}^m$ onto $T$. Furthermore, we denote by $\partial_X T$ the relative boundary of $T$ in $X$. Moreover, we denote by $\mathcal{H}^{d-1}$ the $(d-1)$-dimensional Hausdorff measure on $\mathbb{R}^d$, see \cite[Introduction]{Federer69}. 
The following lemma, which is based on \cite[Lemma A.10.4]{Steinwart15a}, describes the Lebesgue measure of the decision boundary in terms of the Hausdorff measure. Its result will be necessary for the analysis of the main theorem in Section \ref{section_main}.

\begin{lemma}\label{lemma_hausdorff}
Let $X:=\left[-1,1 \right]^d$ and $P$ be a probability measure on $X \times \{-1,1\}$ with fixed version $\n \colon X \to [0,1]$ of its posterior probability. Moreover, let $\lb^d$ be the $d$-dimensional Lebesgue measure and $\mathcal{H}^{d-1}$ be the $(d-1)$-dimensional Hausdorff measure on $\mathbb{R}^d$. Furthermore, let $X_0=\partial_X X_1$ with $\mathcal{H}^{d-1}(X_0)>0$ and let $X_0$ be $(d-1)$-rectifiable. Then, there exists a $\d^{\ast}>0$ such that for all $\d \in (0, \d^{\ast} ]$ we have 
\begin{align*}
\lb^d(\set{x\in X}{\D_{\n}(x) \leq \d}) \leq 4 \mathcal{H}^{d-1}(\set{x \in X}{\n(x)=1/2}) \cdot \d.
\end{align*}
\end{lemma}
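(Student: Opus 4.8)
The plan is to reduce the sublevel set of $\D_{\n}$ to a genuine $\d$-tube around $X_0$ and then to bound the Lebesgue measure of that tube via the $(d-1)$-rectifiability of $X_0$.

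\emph{Step 1: a tube inclusion.} I would first show that
\begin{align*}
\set{x\in X}{\D_{\n}(x)\le\d}\subseteq\set{x\in X}{d(x,X_0)\le\d}\qquad\text{for every }\d>0.
\end{align*}
For $x\in X_0$ this is trivial, so fix $x\in X_1$ with $\D_{\n}(x)=d(x,X_{-1})\le\d$ (the case $x\in X_{-1}$ is symmetric, and if $X_{-1}=\emptyset$ the set on the left is empty). Since $X_{-1}$ is bounded and $X$ is closed, a compactness argument gives $z\in\overline{X_{-1}}$ with $d(x,z)=d(x,X_{-1})\le\d$, and $\overline{X_{-1}}\subseteq X$; by convexity the segment $[x,z]$ lies in $X$. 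Put $t^{\ast}:=\sup\set{t\in[0,1]}{x+t(z-x)\in\overline{X_1}}$ and $w:=x+t^{\ast}(z-x)$. Because $0$ lies in this set and the set is closed, $t^{\ast}$ is attained, so $w\in\overline{X_1}$ and $d(x,w)\le d(x,z)\le\d$. If $t^{\ast}=1$, then $w=z\in\overline{X_{-1}}\subseteq\overline{X\setminus X_1}$; if $t^{\ast}<1$, then for $t\in(t^{\ast},1]$ we have $x+t(z-x)\in X\setminus\overline{X_1}\subseteq X\setminus X_1$, and letting $t\downarrow t^{\ast}$ again yields $w\in\overline{X\setminus X_1}$. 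In either case $w\in\overline{X_1}\cap\overline{X\setminus X_1}=\partial_X X_1=X_0$, hence $d(x,X_0)\le\d$, which proves the inclusion.

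\emph{Step 2: volume of the tube.} The set $X_0=\partial_X X_1$ is relatively closed in the compact set $X$, hence compact, and $(d-1)$-rectifiability in the sense of \cite[3.2.14(1)]{Federer69} forces $\mathcal{H}^{d-1}(X_0)<\infty$, since a Lipschitzian image of a bounded subset of $\mathbb{R}^{d-1}$ has finite $(d-1)$-dimensional Hausdorff measure. By \cite[Lemma A.10.4]{Steinwart15a} (which rests on the theory of rectifiable sets and Minkowski content in \cite{Federer69}) the $(d-1)$-dimensional Minkowski content of $X_0$ equals $\mathcal{H}^{d-1}(X_0)$, that is
\begin{align*}
\lim_{\d\to 0^+}\frac{\lb^d(\set{x\in\mathbb{R}^d}{d(x,X_0)\le\d})}{2\d}=\mathcal{H}^{d-1}(X_0).
\end{align*}
Since $\mathcal{H}^{d-1}(X_0)>0$ by hypothesis, there is a $\d^{\ast}>0$ with $\lb^d(\set{x\in\mathbb{R}^d}{d(x,X_0)\le\d})\le 4\,\mathcal{H}^{d-1}(X_0)\,\d$ for all $\d\in(0,\d^{\ast}]$; the gap between the factor $2$ in the limit and the factor $4$ in the claim is exactly what accommodates the lower-order corrections. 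Combining this with the inclusion of Step 1 and recalling $X_0=\set{x\in X}{\n(x)=1/2}$ yields the assertion.

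\emph{Main obstacle.} I expect Step 1 to be the crux: the sublevel sets of $\D_{\n}$ are defined through distances to $X_1$ and $X_{-1}$ rather than to $X_0$, and since $\n$ is not assumed continuous one cannot argue along its level sets. The hypothesis $X_0=\partial_X X_1$ is precisely what makes the above segment argument work and, in particular, what gives $\overline{X_1}\cap\overline{X_{-1}}\subseteq X_0$. Step 2 is then essentially a reference, the only mild subtlety being the passage from the asymptotic Minkowski-content identity to an inequality valid on an entire interval $(0,\d^{\ast}]$, which is why the statement carries the constant $4$ instead of $2$.
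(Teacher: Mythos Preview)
Your argument is correct and rests on the same two ingredients as the paper's proof---an inclusion of $\{\D_{\n}\le\d\}$ into a $\d$-tube, followed by a Minkowski-content/rectifiability bound---but the intermediate tube and the way you reach it differ. The paper works with the inner/outer parallel sets $T^{+\d}:=\{x\in X:d(x,T)\le\d\}$ and $T^{-\d}:=X\setminus(X\setminus T)^{+\d}$ and establishes the inclusion $\{\D_{\n}\le\d\}\subseteq(X_1^{+\d}\setminus X_1^{-\d})\cup X_0$ by a short set-theoretic case check that uses no geometry of $X$; it then invokes \cite[Lemma~A.10.3 and the proof of Lemma~A.10.4(ii)]{Steinwart15a} directly on the boundary layer $X_1^{+\d}\setminus X_1^{-\d}$. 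You instead go straight to the tube $\{d(\cdot,X_0)\le\d\}$ around $X_0$ via the segment argument, which is geometrically more transparent but exploits the convexity of $X=[-1,1]^d$ in an essential way. Both routes ultimately appeal to the same Federer/Minkowski-content machinery, so the difference is mainly one of packaging; the paper's inclusion would carry over unchanged to non-convex ambient sets $X$, whereas your Step~1 would need to be reworked in that situation.
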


\section{Oracle Inequality and Learning Rates}\label{section_main}

Our goal is to find an upper bound for the excess risk $\RP{L}{h_{D,s}}-\RPB{L}$. The idea is to split $X$ into two overlapping sets and to find a bound on the risks over these sets by using information on $P$. To this end, we denote the set of indices of cubes that intersect $X$ by 
\begin{align*}
J &:= \set{j \geq 1}{A_j \cap \left[-1,1 \right]^d  \neq \emptyset}.
\end{align*}
Next, we split this set into cubes that lie near the decision boundary and into cubes that are bounded away from the decision boundary. To be more precisely, we define, for $r>0$ and a version $\n$ for which the assumptions at the end of Section \ref{section_assump} hold, the set of indices of cubes near the decision boundary by
\begin{align*}
J_N^r &:=\set{\,j \in J}{\forall \,x \in A_j : \D_{\n}(x) \leq 3r}
\end{align*}
and the set of indices of cubes that are sufficiently bounded away by
\begin{align*}
J_F^r &:= \set{j \in J}{\forall \,x \in A_j :  \D_{\n}(x) \geq r}.
\end{align*}
Moreover, we write
\begin{align}
N_r &:= \bigcup_{j\in J_N^r} A_j,\label{def_A}\\
F_r &:= \bigcup_{j\in J_F^r} A_j \label{def_B}.
\end{align}
The next lemma shows that we are able to assign all $x \in A_j$ with $j \in J_F^r$
either to the class $X_{-1}$ or to $X_1$. Furthermore, we need to set geometric requirements to ensure that $X \subset N_r \cup F_r$.

\begin{lemma}\label{lemma_sets}
Let $\mathcal{A}=(A_j)_{j \geq 1}$ be a partition of $\mathbb{R}^d$ into cubes of side length $s \in (0,1]$ and let $X:=\left[-1,1 \right]^d$. For $r \geq s/2$ define the sets $N_r$ and $F_r$ by (\ref{def_A}) and (\ref{def_B}). Then, the following statements are true:
\begin{itemize}
\item[\label{either}{i)}] We have either $A_j \cap X_1 = \emptyset$ or $A_j \cap X_{-1} = \emptyset$ for $j \in J_F^r$.
\item[\label{xsubsetcup}{ii)}] If $X_0=\partial_X X_1=\partial_X X_{-1}$, we have $X \subset N_r \cup F_r$.
\end{itemize}
\end{lemma}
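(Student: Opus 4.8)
For part (i), I would argue by contradiction. Suppose there is an index $j\in J_F^r$ with $A_j\cap X_1\neq\emptyset$ and $A_j\cap X_{-1}\neq\emptyset$, say $x_1\in A_j\cap X_1$ and $x_{-1}\in A_j\cap X_{-1}$. Since $A_j$ is a cube of side length $s$, any two of its points are at Euclidean distance at most $s\sqrt d$; in particular $d(x_1,x_{-1})\le s\sqrt d$. Actually, to keep the constants clean one can note that the relevant distance here is along a path inside the (convex) cube, so $d(x_1,X_{-1})\le d(x_1,x_{-1})\le \operatorname{diam}(A_j)$. But $j\in J_F^r$ forces $\D_{\n}(x_1)\ge r$, and since $x_1\in X_1$ we have $\D_{\n}(x_1)=d(x_1,X_{-1})$, so $d(x_1,X_{-1})\ge r$. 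Combining, $r\le \operatorname{diam}(A_j)$, which one checks is incompatible with $r\ge s/2$ once the diameter is computed; more precisely the correct reading is that a point of $A_j$ lying in $X_{-1}$ certifies $\D_{\n}(x_1)<r$ for every $x_1\in A_j$ at distance $<r$ from it, and since every point of $A_j$ is within $s/2$ of some chosen point and $r\ge s/2$, this contradicts $A_j\subset\{\D_{\n}\ge r\}$. The cleanest version: pick $x_{-1}\in A_j\cap X_{-1}$; then for every $x\in A_j\cap X_1$ we get $\D_{\n}(x)=d(x,X_{-1})\le d(x,x_{-1})$, and one must then show $A_j$ cannot simultaneously satisfy $\D_{\n}\ge r$ everywhere — the point being that $x_{-1}$ itself lies in $A_j$ with $\D_{\n}(x_{-1})=0<r$, directly contradicting $j\in J_F^r$. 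So in fact part (i) is immediate: $j\in J_F^r$ says $\D_{\n}\ge r>0$ on all of $A_j$, hence $A_j$ meets neither $X_0$ nor — by definition of $\D_{\n}$ vanishing on $X_0$ and the closure structure — can it meet both $X_1$ and $X_{-1}$, since a cube meeting both must contain a boundary point where $\n=1/2$.

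For part (ii), I would fix $x\in X$ and show $x\in N_r$ or $x\in F_r$, i.e. that $A(x)$ has its index in $J_N^r$ or in $J_F^r$. Suppose $A(x)\notin J_F^r$; then there exists $x'\in A(x)$ with $\D_{\n}(x')<r$. I want to conclude $\D_{\n}(x'')\le 3r$ for all $x''\in A(x)$, which puts the index in $J_N^r$. If $\D_{\n}(x')=0$ then $x'$ lies in the closure region where $\n=1/2$ (using $X_0=\partial_X X_1=\partial_X X_{-1}$), and any $x''\in A(x)$ satisfies $\D_{\n}(x'')\le d(x'',x')\le \operatorname{diam}(A(x))\le s\sqrt d$; I then need $s\sqrt d\le 3r$, which is where $r\ge s/2$ should be used — but $s\sqrt d \le 3r$ needs $r\ge s\sqrt d/3$, so either the intended metric is the $\ell_\infty$-metric (in which $\operatorname{diam}(A_j)=s$ and $s\le 2r\le 3r$ works), or there is an implicit dimension-dependent adjustment. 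Assuming the paper works with the supremum metric on $\mathbb{R}^d$ — consistent with "cubes of side length $s$" and the factor $3$ in the definition of $J_N^r$ — the bound $\operatorname{diam}(A_j)=s\le 2r$ is exactly what makes the constant $3$ comfortable. If instead $\D_{\n}(x')\in(0,r)$, pick $z$ in the opposite class with $d(x',z)<r$ (possible up to $\varepsilon$ by definition of the infimum, using the hypothesis that the boundary of each class is $X_0$ so the infimum is attained in the closure), and for arbitrary $x''\in A(x)$ estimate $\D_{\n}(x'')\le d(x'',z)\le d(x'',x')+d(x',z)< s + r \le 3r$, again using $s\le 2r$. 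Hence $A(x)\in J_N^r$, giving $x\in N_r$.

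The main obstacle is bookkeeping the constants and the metric correctly: verifying that the triangle-inequality chain $d(x'',z)\le d(x'',x')+d(x',z)< s+r$ together with $r\ge s/2$ yields the clean bound $3r$ appearing in the definition of $J_N^r$, and handling the degenerate case $\D_{\n}(x')=0$ (as well as the "infimum not attained" case) via the hypothesis $X_0=\partial_X X_1=\partial_X X_{-1}$, which guarantees that a point at zero distance from the opposite class actually sits on $X_0$ and that nearby points of the opposite class exist arbitrarily close to the realizing distance. I would also remark that the hypothesis $X_0=\partial_X X_1$ in part (ii) is precisely what rules out pathological configurations where $X_1$ and $X_{-1}$ are separated by a fat region $X_0$ that a single cube could straddle without either index landing in $J_N^r$ or $J_F^r$.
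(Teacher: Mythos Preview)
Your argument for part ii) is essentially the paper's: if a cube fails the $J_F^r$ condition via some $x'$ with $\D_{\n}(x')<r$, a triangle-inequality estimate gives $\D_{\n}(x'')\le s+r\le 3r$ for every $x''$ in that cube (the paper does work in the $\ell_\infty$ metric, so $\operatorname{diam}(A_j)=s$, as you guessed), hence the cube lies in $J_N^r$.

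Part i), however, has a genuine gap --- none of your three attempts goes through. The direct diameter bound yields only $r\le \D_{\n}(x_1)\le d(x_1,x_{-1})\le s$, which combined with $r\ge s/2$ gives $s/2\le r\le s$, no contradiction. The claim ``$\D_{\n}(x_{-1})=0$'' is false: by definition $\D_{\n}(x_{-1})=d(x_{-1},X_1)$, which here is at most $s$ but need not vanish. Finally, the assertion ``a cube meeting both $X_1$ and $X_{-1}$ must contain a point where $\eta=1/2$'' is not valid for arbitrary measurable $\eta$, and part i) does \emph{not} assume the topological hypothesis $X_0=\partial_X X_1=\partial_X X_{-1}$.

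The paper's argument exploits the $J_F^r$ condition more fully. Take the segment from $x_{-1}$ to $x_1$; by convexity it lies in $A_j$, so every point on it has $\D_{\n}\ge r>0$ and thus belongs to $X_1\cup X_{-1}$. Discretize into $m+1$ equally spaced points; since the endpoints are in opposite classes, some consecutive pair $x_i\in X_1$, $x_{i+1}\in X_{-1}$ exists, whence
\[
r\le \D_{\n}(x_i)\le \|x_i-x_{i+1}\|_\infty \le \frac{s}{m}\le \frac{2r}{m},
\]
impossible for $m\ge 3$. The point is that although the cube may have diameter up to $2r$, the $J_F^r$ condition forces \emph{every} point of $A_j$ --- not just the endpoints --- to be at distance $\ge r$ from the opposite class, so the jump between classes can be localized to an arbitrarily short subinterval.

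Your connectedness idea can be rescued, but only with this same observation: under $j\in J_F^r$, the sets $A_j\cap X_1$ and $A_j\cap X_{-1}$ are each relatively open in $A_j$ (if $y\in A_j\cap X_1$ and $z\in A_j$ with $\|y-z\|_\infty<r$, then $z\in X_1\cup X_{-1}$ since $\D_{\n}(z)\ge r$, and $z\in X_{-1}$ would give $d(y,X_{-1})<r$, a contradiction). Together they partition the connected cube $A_j$, so one of them is empty. This step --- deriving openness from the $J_F^r$ hypothesis --- is precisely what is missing from your write-up.
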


Lemma \ref{lemma_sets} ii) leads to a helpful splitting of the excess risk as the following lemma shows.

 \begin{lemma}
Under the assumptions of Lemma \ref{lemma_sets} ii) we have
\label{lemma_risksplit}
\begin{align*}
\begin{split}
&\RP{L}{h_{D,s}}-\RPB{L}\\
 &\leq \left( \RP{L_{N_r}}{h_{D,s}} -\RPB{L_{N_r}}\right)+ \left( \RP{L_{F_r}}{h_{D,s}}-\RPB{L_{F_r}} \right).
 \end{split}
\end{align*}
\end{lemma}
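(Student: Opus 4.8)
The plan is to rewrite every excess-risk term as the $P_X$-integral of one and the same nonnegative function, and then to exploit that $N_r\cup F_r$ covers $X$.

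First I would invoke the standard inner-risk representation of excess risks (see e.g.\ \cite[Chapter~3]{StCh08}): for any measurable $f\colon X\to\R$ one has
\begin{align*}
\RP{L}{f}-\RPB{L}=\int_X\big(\mathcal{C}_{L,P(\,\cdot\,|x)}(f(x))-\mathcal{C}^{\ast}_{L,P(\,\cdot\,|x)}\big)\,dP_X(x),
\end{align*}
where $\mathcal{C}_{L,P(\,\cdot\,|x)}(t):=\int_Y L(y,t)\,dP(y|x)$ is the inner $L$-risk at $x$ and $\mathcal{C}^{\ast}_{L,P(\,\cdot\,|x)}:=\inf_{t\in\R}\mathcal{C}_{L,P(\,\cdot\,|x)}(t)$ its minimum. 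Since $L_{N_r}(x,y,t)=\eins_{N_r}(x)\,L(y,t)$, the inner $L_{N_r}$-risk at $x$ equals $\eins_{N_r}(x)\,\mathcal{C}_{L,P(\,\cdot\,|x)}(t)$ and, because $\eins_{N_r}(x)\geq 0$, its minimum equals $\eins_{N_r}(x)\,\mathcal{C}^{\ast}_{L,P(\,\cdot\,|x)}$; hence the same representation gives $\RP{L_{N_r}}{f}-\RPB{L_{N_r}}=\int_X\eins_{N_r}(x)\big(\mathcal{C}_{L,P(\,\cdot\,|x)}(f(x))-\mathcal{C}^{\ast}_{L,P(\,\cdot\,|x)}\big)\,dP_X(x)$, and analogously for $F_r$. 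Writing $g(x):=\mathcal{C}_{L,P(\,\cdot\,|x)}(h_{D,s}(x))-\mathcal{C}^{\ast}_{L,P(\,\cdot\,|x)}\geq 0$, this reads
\begin{align*}
\RP{L}{h_{D,s}}-\RPB{L}=\int_X g\,dP_X,\qquad \RP{L_{N_r}}{h_{D,s}}-\RPB{L_{N_r}}=\int_X\eins_{N_r}\,g\,dP_X,
\end{align*}
and the same holds with $F_r$ in place of $N_r$.

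Second, I would bring in Lemma~\ref{lemma_sets}~ii): under the hypothesis $X_0=\partial_XX_1=\partial_XX_{-1}$ it yields $X\subset N_r\cup F_r$, so that $\eins_{N_r}(x)+\eins_{F_r}(x)\geq\eins_{N_r\cup F_r}(x)=1$ for every $x\in X$, hence $P_X$-almost everywhere. As $g\geq 0$, this gives
\begin{align*}
\RP{L}{h_{D,s}}-\RPB{L}&=\int_X g\,dP_X\leq\int_X\big(\eins_{N_r}+\eins_{F_r}\big)\,g\,dP_X\\
&=\big(\RP{L_{N_r}}{h_{D,s}}-\RPB{L_{N_r}}\big)+\big(\RP{L_{F_r}}{h_{D,s}}-\RPB{L_{F_r}}\big),
\end{align*}
which is the assertion.

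The only step that deserves some care is the inner-risk bookkeeping for the $x$-dependent losses $L_{N_r},L_{F_r}$, namely the identity $\RPB{L_{N_r}}=\int_X\eins_{N_r}(x)\,\mathcal{C}^{\ast}_{L,P(\,\cdot\,|x)}\,dP_X(x)$, which rests on the standard measurable-selection argument behind \cite[Chapter~3]{StCh08}. Equivalently, one may argue via the pointwise loss identity $L=L_{N_r}+L_{F_r}-L_{N_r\cap F_r}$ on $X$ (again from Lemma~\ref{lemma_sets}~ii)) together with the facts that $f^{\ast}_{L,P}$ minimizes the risk of each of these four losses simultaneously and that $\RP{L_{N_r\cap F_r}}{h_{D,s}}\geq\RPB{L_{N_r\cap F_r}}$: subtracting the resulting risk identities for $h_{D,s}$ and for $f^{\ast}_{L,P}$ and dropping the nonnegative term $\RP{L_{N_r\cap F_r}}{h_{D,s}}-\RPB{L_{N_r\cap F_r}}$ again yields the claim. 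There is no substantial obstacle here; the real content of the lemma is precisely the covering property $X\subset N_r\cup F_r$ supplied by Lemma~\ref{lemma_sets}~ii).
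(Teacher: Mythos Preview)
Your proof is correct and follows essentially the same route as the paper: write the excess risk as the $P_X$-integral of the nonnegative pointwise excess $g(x)=\mathcal{C}_{L,P(\cdot|x)}(h_{D,s}(x))-\mathcal{C}^{\ast}_{L,P(\cdot|x)}$, then use $X\subset N_r\cup F_r$ from Lemma~\ref{lemma_sets}~ii) to bound $\int_X g\,dP_X$ by $\int_{N_r}g\,dP_X+\int_{F_r}g\,dP_X$. Your inner-risk bookkeeping is in fact more explicit than the paper's, which writes the integrand somewhat loosely as ``$L(y,f(x)-f^{\ast}(x))$'' and passes from $\int_{N_r\cup F_r}$ to $\int_{N_r}+\int_{F_r}$ with an equality sign; your indicator inequality $\eins_{N_r}+\eins_{F_r}\geq 1$ makes the overlap issue transparent.
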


That means, we can bound the excess risk $\RP{L}{h_{D,s}}-\RPB{L}$ if we find bounds on the excess risks over the sets $N_r$ and $F_r$. For that purpose, we use an oracle inequality for empirical risk minimizer separately on both error terms, see \cite[Theorem 7.2]{StCh08}. This is possible, since the following lemma shows that, considering the loss $L_{T_J}$ for any set $T_J$ constructed as in (\ref{def_set_T}), the empirical histogram rule $h_{D,s}$ is still an empirical risk minimizer over $\F$.

\begin{lemma}\label{lemma_erm_subset}
Consider for an arbitrary index set $J \subset \lbrace \,1,\ldots,m \,\rbrace$ the set $T_J:= \bigcup_{j\in J} A_j$ and the related loss $L_{T_J}: X \times Y \times \mathbb{R} \rightarrow [0, \infty)$ defined in (\ref{loss_A}). Then, the empirical histogram rule $h_{D,s}$ is an empirical risk minimizer over $\F$ for the loss $L_{T_J}$, that means
\begin{align*}
\Rx{L_{T_J}}{D}{h_{D,s}}=\inf_{f\in \F}\Rx{L_{T_J}}{D}{f}.
\end{align*}
\end{lemma}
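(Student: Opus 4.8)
The plan is to exploit that the empirical risk associated with $L_{T_J}$ decouples completely over the cells of the partition $\mathcal{A}$, so that minimizing over $\F$ reduces to a collection of independent one-parameter problems, each of which is solved by a majority vote. This is the same argument that was sketched for the unrestricted loss $L$ right after the definition of $\F$; the presence of the extra indicator $\eins_{T_J}$ only changes which cells enter the sum.

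First I would fix $f=\sum_{A_j\cap X\neq\emptyset}c_j\eins_{A_j}\in\F$ and, using that the cells $A_j$ are pairwise disjoint and that $f(x_i)=c_j$ whenever $x_i\in A_j$, rewrite the empirical risk as
\begin{align*}
\Rx{L_{T_J}}{D}{f}=\frac1n\sum_{i=1}^n\eins_{\bigcup_{j\in J}A_j}(x_i)\,L(y_i,f(x_i))=\sum_{j\in J}\frac1n\sum_{i=1}^n\eins_{A_j}(x_i)\,L(y_i,c_j).
\end{align*}
Since the coefficients $(c_j)_{j\in J}$ may be chosen independently of one another (and of the coefficients of the remaining cells, which do not appear here), and since cells containing no data point contribute $0$ irrespective of their coefficient, it follows that
\begin{align*}
\inf_{f\in\F}\Rx{L_{T_J}}{D}{f}=\sum_{j\in J}\min_{c\in\{-1,1\}}\frac1n\sum_{i=1}^n\eins_{A_j}(x_i)\,L(y_i,c).
\end{align*}
The second step is the per-cell minimization: for $j\in J$ the inner sum is $\frac1n$ times the number of indices $i$ with $x_i\in A_j$ and $y_i\cdot\text{sign}\,c\le 0$, i.e.\ the number of data points in $A_j$ whose label disagrees with the sign of $c$; this is minimized by choosing $c$ to be a majority label in $A_j$, with an arbitrary choice — say $c=1$ — when $A_j$ is empty or the two labels tie. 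But writing out $f_{D,s}$ shows that $n\,f_{D,s}(x)=\#\{i:x_i\in A_j,\,y_i=1\}-\#\{i:x_i\in A_j,\,y_i=-1\}$ for $x\in A_j$, so by (\ref{def_hist}) the function $h_{D,s}=\text{sign}\,f_{D,s}$ returns precisely such a minimizing majority label on each cell (with ties and empty cells sent to $1$).

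Putting these together, $h_{D,s}$ is constant on every cell with values in $\{-1,1\}$, so $h_{D,s}|_X\in\F$, and by the previous paragraph it attains the infimum summand by summand, whence $\Rx{L_{T_J}}{D}{h_{D,s}}=\inf_{f\in\F}\Rx{L_{T_J}}{D}{f}$. I do not expect a substantial obstacle here; the only point requiring a little care is to note that the minimization over $\F$ genuinely decouples across cells — which it does exactly because $\F$ imposes no constraint linking the coefficients $c_j$ — together with the bookkeeping of the tie-breaking and empty-cell conventions already built into (\ref{def_hist}).
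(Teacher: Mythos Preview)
Your proposal is correct and follows essentially the same approach as the paper: decouple the empirical $L_{T_J}$-risk over the cells $A_j$, $j\in J$, observe that each summand $\frac1n\sum_{i}\eins_{A_j}(x_i)\eins_{y_i\neq c_j}$ is minimized by a majority vote, and note that $h_{D,s}$ implements exactly this majority rule cell by cell. Your write-up is in fact more explicit than the paper's about the decoupling of the infimum and the tie/empty-cell conventions, but no new idea is involved.
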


Before we state our oracle inequality we discuss in a more detailed way the improvement that we gained by our separation technique described above. First, we make no approximation error on the set $F_r$, which consists of cells that are sufficiently bounded away from the decision boundary. This follows from the circumstance that $h_{D,s}$ learns correctly on those cells. We refer the reader to Part 1 of the proof of Lemma \ref{Hist_theorem_oracle_inequality} for details. Second, the main refinement arises from the fact that we achieve, under the condition that the decision boundary controls the noise from below,  a bound on $F_r$ of the form 
\begin{align*}
 \E_P (L \circ f-L \circ f^{\ast}_{L,P})^2&\leq V \cdot \E_P (L \circ f-L \circ f^{\ast}_{L,P})^{\th}
\end{align*}
with the best possible exponent, $\th=1$. Here, $V$ is a positive constant. The latter bound is known in the literature as variance bound. This bound plays an important part in the analysis of the risk terms since we have small variance if the right-hand side of the latter inequality is small. This relation is shown in detail in the next lemma.

\begin{lemma}\label{Lemma_var_bound}
Let $X:=\left[-1,1 \right]^d$ and $P$ be a probability measure on $X \times \{-1,1\}$ with fixed version $\n \colon X \to [0,1]$ of its posterior probability. Assume that the associated distance to the decision boundary  $\D_{\n}$ controls the noise from below by the exponent $\g \in [0,\infty)$ and consider for some fixed $r >0$ the set $F_r$, defined in $(\ref{def_B})$. Furthermore, let $L:=L_{class}$ be the classification loss and let $f^{\ast}_{L,P}$ be a fixed Bayes decision function. Then, for all measurable $f \colon X \to \lbrace -1,1 \rbrace$ we have
\begin{align*}
 \E_P (L_{F_r} \circ f-L_{F_r} \circ f^{\ast}_{L,P})^2&\leq \frac{c_{LC}}{r^{\g}}  \E_P (L_{F_r} \circ f-L_{F_r}\circ f^{\ast}_{L,P}).
\end{align*}
\end{lemma}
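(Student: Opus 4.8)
The plan is to evaluate both sides of the claimed inequality explicitly and then to exploit the geometry of $F_r$ together with the lower control of the noise. Note first that $f$ takes values in $\{-1,1\}$ by hypothesis and that the Bayes decision function $f^{\ast}_{L,P}=\text{sign}(2\n-1)$ also takes values in $\{-1,1\}$ (with the convention $\text{sign}(0)=1$ used in $(\ref{def_hist})$). Hence, for the classification loss $L$, the difference $L(y,f(x))-L(y,f^{\ast}_{L,P}(x))$ lies in $\{-1,0,1\}$, and whenever $f(x)\neq f^{\ast}_{L,P}(x)$ we have $\{f(x),f^{\ast}_{L,P}(x)\}=\{-1,1\}$, so exactly one of the two labels agrees with $y$ and the squared difference equals $1$, independently of $y$. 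Using $\eins_{F_r}^2=\eins_{F_r}$, this yields the pointwise identity $(L_{F_r}\circ f-L_{F_r}\circ f^{\ast}_{L,P})^2=\eins_{F_r}(x)\,\eins_{\{f(x)\neq f^{\ast}_{L,P}(x)\}}$, and therefore, with $P_X$ denoting the marginal of $P$ on $X$,
\begin{align*}
\E_P(L_{F_r}\circ f-L_{F_r}\circ f^{\ast}_{L,P})^2=\int_{F_r}\eins_{\{f(x)\neq f^{\ast}_{L,P}(x)\}}\,dP_X(x).
\end{align*}

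Next I would compute the first moment by conditioning on $x$. Since $\E_P(L(y,1)\,|\,x)=1-\n(x)$ and $\E_P(L(y,-1)\,|\,x)=\n(x)$, and since $f^{\ast}_{L,P}(x)$ attains $\min\{\n(x),1-\n(x)\}$ among the two values $\pm1$, the inner conditional expectation of $L(y,f(x))-L(y,f^{\ast}_{L,P}(x))$ equals $|2\n(x)-1|$ on $\{f(x)\neq f^{\ast}_{L,P}(x)\}$ and $0$ otherwise. Integrating against $P_X$ over $F_r$ gives
\begin{align*}
\E_P(L_{F_r}\circ f-L_{F_r}\circ f^{\ast}_{L,P})=\int_{F_r}\eins_{\{f(x)\neq f^{\ast}_{L,P}(x)\}}\,|2\n(x)-1|\,dP_X(x).
\end{align*}

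Finally I would invoke the defining property of $F_r$: by the definition of $J_F^r$, every $x\in F_r$ satisfies $\D_{\n}(x)\geq r>0$. Combining this with the assumption $\D_{\n}^{\g}(x)\leq c_{LC}\,|2\n(x)-1|$ from $(\ref{def_lc})$ and the monotonicity of $t\mapsto t^{\g}$ on $(0,\infty)$ (with $t^{0}\equiv1$ when $\g=0$), we obtain $|2\n(x)-1|\geq r^{\g}/c_{LC}$ for $P_X$-almost all $x\in F_r$. Substituting this lower bound into the integral of the second display and comparing with the first display yields
\begin{align*}
\E_P(L_{F_r}\circ f-L_{F_r}\circ f^{\ast}_{L,P})\geq\frac{r^{\g}}{c_{LC}}\int_{F_r}\eins_{\{f(x)\neq f^{\ast}_{L,P}(x)\}}\,dP_X(x)=\frac{r^{\g}}{c_{LC}}\,\E_P(L_{F_r}\circ f-L_{F_r}\circ f^{\ast}_{L,P})^2,
\end{align*}
and multiplying both sides by $c_{LC}/r^{\g}$ gives the assertion. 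I do not anticipate a genuine obstacle here; the only slightly delicate points are the two pointwise identities above, which rely on $f$ and $f^{\ast}_{L,P}$ being $\{-1,1\}$-valued, and the degenerate case $\g=0$, where one simply notes that $\D_{\n}^{0}\equiv1$ on $F_r$.
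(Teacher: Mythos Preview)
Your proposal is correct and follows essentially the same approach as the paper: both compute the squared difference as an indicator on the disagreement set restricted to $F_r$, use the lower-control assumption together with $\D_\n\geq r$ on $F_r$ to bound $|2\n-1|$ from below, and identify the resulting weighted integral as the excess risk on $F_r$. The only cosmetic differences are that the paper writes $\eins_{F_r}\tfrac{|f-f^{\ast}_{L,P}|}{2}$ in place of your $\eins_{F_r}\eins_{\{f\neq f^{\ast}_{L,P}\}}$ and cites Lemma~\ref{Appendix1} for the excess-risk formula rather than deriving it by conditioning as you do.
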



%

We remark that the right-hand side of the variance bound on $F_r$ depends on the separation parameter $r$. This dependence is also reflected in the risk term on $F_r$. In particular, we show in Part 1 of the proof of Theorem \ref{Hist_theorem_oracle_inequality} by applying \cite[Theorem 7.2]{StCh08} on the risk term on the set $F_r$ that the improvements mentioned above lead to
\begin{align*}
\RP{L_{F_r}}{h_{D,s}}-\RPB{L_{F_r}} \leq \frac{32 \tilde{c}( 8^{d+1}s^{-d}+\t)}{r^{\g} n}
\end{align*}
with probability $P^n \geq 1-e^{-\t}$, where $\t\geq 1$ and $\tilde{c}$ is a positive constant. Whereas this error term increases for $r \rightarrow 0$, the error term on the set $N_r$ behaves exactly the opposite way, that is, it decreases for $r \rightarrow 0$. In fact, bounding the risk on $N_r$ requires additional knowledge of the behavior of $P$ in the vicinity of the decision boundary. By applying \cite[Theorem~7.2]{StCh08} on the risk on the set $N_r$ we show in Part 2 of the proof of  Theorem \ref{Hist_theorem_oracle_inequality} under the assumption that $P$ has ME $\a$ and MNE $\b$ that
\begin{align*}
\RP{L_{N_r}}{h_{D,s}}-\RPB{L_{N_r}}\leq 6(c_{\text{MNE}}s)^{\b}+ 4 \left( \frac{8V(\bar{c}rs^{-d} +\t)}{n}\right)^{\frac{\a+\g}{\a+2\g}}
\end{align*}
holds with probability $P^n \geq 1-e^{-\t}$. Here, $\bar{c}$ is a positive constant, $\t \geq 1$ and $V$ is the prefactor of the variance bound on $N_r$, shown in the second part of the proof. We refer the reader to the proof of Theorem \ref{Hist_theorem_oracle_inequality} for exact constants. If we balance the obtained risk terms over $N_r$ and $F_r$ with respect to $r$, we obtain the oracle inequality presented in the following theorem. For this purpose, we define the positive constant
\begin{align}\label{big_const}
\tilde{c}_{\a,\g,d}:= \left(\frac{16\g(\a+2\g) \cdot 8^{d+1} \max\lbrace c_{\text{LC}},2^{\g} \rbrace \cdot (\a+\g)^{-1}}{ \hat{c} ^{\frac{\a+\g}{\a+2\g}} } \right)^{\frac{\a+\g}{\a+\g+\g(\a+2\g)}},
\end{align}	
which depends on $\a,\g$ and $d$ and where 
\begin{align*}
\hat{c}:=32 \max \lbrace 12\mathcal{H}^{d-1}(\lbrace \n = 1/2 \rbrace),1\rbrace \cdot \max\left\lbrace 1, \frac{\a+\g}{\g}c_{\text{ME}}^{\frac{\a\g}{\a+\g}}\left( \frac{\g c_{\text{LC}}}{\a}\right)^{\frac{\a}{\a+\g}} \right\rbrace.
\end{align*}

\begin{theorem}\label{Hist_theorem_oracle_inequality}
Let $\mathcal{A}=(A_j)_{j \geq 1}$ be a partition of $\mathbb{R}^d$ into cubes of side length $s \in (0,1]$. Let $X:=\left[-1,1 \right]^d$ and $P$ be a probability measure on $X \times \{-1,1\}$ with fixed version $\n \colon X \to [0,1]$ of its posterior probability. Assume that the associated distance to the decision boundary  $\D_{\n}$ controls the noise from below by the exponent $\g \in [0,\infty)$ and assume as well that P has MNE $\beta \in (0, \infty]$ and ME $\a \in (0,\infty]$. Furthermore, let $X_0=\partial_X X_1=\partial_X X_{-1}$ with $\mathcal{H}^{d-1}(X_0)>0$ and let $X_0$ be $(d-1)$-rectifiable. Let $L$ be the classification loss and let for fixed $n \geq 1$ and $\tau \geq 1$ the bounds 
\begin{align}\label{s_leq}
s \leq \tilde{c}_{\a,\g,d}^{\frac{(1+\g)(\a+\g)+\g^2}{(1+\g)(\a+\g)+\g^2+d\g}}\left(\frac{\t}{n} \right)^{\frac{\g}{(1+\g)(\a+\g)+\g^2+d\g}},
\end{align}
and 
\begin{align}\label{s_geq}
s^dn \geq \t \left( \frac{\tilde{c}_{\a,\g,d}}{\min\lbrace \frac{\d^{\ast}}{3},1\rbrace} \right)^{\frac{(1+\g)(\a+\g)+\g^2}{\g}}
\end{align}
be satisfied, where the constant $\tilde{c}_{\a,\g,d}$ is defined by (\ref{big_const}) and the constant $\d^{\ast}>0$ is the one of Lemma \ref{lemma_hausdorff}. Then, there exists a constant $c_{\a,\g,d}>0$ such that
\begin{align}\label{oracle_theorem}
\RP{L}{h_{D,s}}-\RPB{L}&\leq 6\left(c_{\text{MNE}}s \right)^{\b}+ c_{\a,\g,d} \left( \frac{\t}{s^dn} \right)^{\frac{ (1+\g)(\a+\g)}{(1+\g)(\a+\g)+\g^2}}
\end{align}
holds with probability $P^n \Geq 1-2e^{-\t}$, where the constant  $c_{\a,\g,d}$ only depends on $\a,\g$ and $d$.
\end{theorem}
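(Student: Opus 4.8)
The plan is to assemble Theorem~\ref{Hist_theorem_oracle_inequality} out of the ingredients prepared above by carrying out the two-region decomposition and then choosing the separation parameter $r$ optimally. First I would invoke Lemma~\ref{lemma_risksplit} (whose hypotheses are guaranteed by the standing assumption $X_0=\partial_X X_1=\partial_X X_{-1}$, together with Lemma~\ref{lemma_sets}~ii)) to reduce the excess risk on $X$ to the sum of the excess risks on $N_r$ and on $F_r$. For the $F_r$-term I would apply the oracle inequality \cite[Theorem~7.2]{StCh08} to the empirical risk minimizer $h_{D,s}$ over $\F$ with the loss $L_{F_r}$; this is legitimate by Lemma~\ref{lemma_erm_subset}. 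Two facts make this term small: on the cells in $J_F^r$ the infinite-sample histogram already classifies correctly (so the approximation error over $\F$ with respect to $L_{F_r}$ vanishes — here one uses Lemma~\ref{lemma_sets}~i), that each such cell lies entirely in $X_1$ or in $X_{-1}$), and the variance bound of Lemma~\ref{Lemma_var_bound} holds with exponent $\th=1$ and constant $c_{LC}r^{-\g}$. Feeding these into \cite[Theorem~7.2]{StCh08}, together with the cardinality bound $|J|\le (2/s+2)^d\le 8^d s^{-d}$ and hence $|J_F^r|\le \bar c\, r s^{-d}$ for the relevant covering/entropy input, yields
\begin{align*}
\RP{L_{F_r}}{h_{D,s}}-\RPB{L_{F_r}} \leq \frac{32 \tilde c\,(8^{d+1}s^{-d}+\t)}{r^{\g} n}
\end{align*}
with probability at least $1-e^{-\t}$, as announced in the text.

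Next I would treat the $N_r$-term, again via \cite[Theorem~7.2]{StCh08} applied to $h_{D,s}$ with the loss $L_{N_r}$. Here there genuinely is an approximation error, but it is controlled by the MNE assumption: since $N_r\subset\{\D_{\n}\le 3r\}$ one gets $\inf_{f\in\F}\RP{L_{N_r}}{f}-\RPB{L_{N_r}}\le \int_{\{\D_\n\le 3r\}}|2\n-1|\,dP_X\le (3c_{\mathrm{MNE}}r)^{\b}$, and choosing $r$ comparable to $s$ at the end will turn this into the $6(c_{\mathrm{MNE}}s)^{\b}$ term. For the stochastic part I need a variance bound on $N_r$; combining the margin exponent $\a$ with the lower control $\D_\n^\g\le c_{LC}|2\n-1|$ gives a Tsybakov-type noise exponent and hence a variance bound with exponent $\th=\frac{\a}{\a+\g}$ (equivalently the $(\a+\g)/(\a+2\g)$ rate exponent appearing below), with a prefactor $V$ of the stated form depending on $c_{\mathrm{ME}},c_{LC},\a,\g$. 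Plugging this into the oracle inequality with $|J_N^r|\le \bar c\, r s^{-d}$ produces
\begin{align*}
\RP{L_{N_r}}{h_{D,s}}-\RPB{L_{N_r}}\leq 6(c_{\mathrm{MNE}}s)^{\b}+ 4\left( \frac{8V(\bar c\,rs^{-d} +\t)}{n}\right)^{\frac{\a+\g}{\a+2\g}}.
\end{align*}
This is where Lemma~\ref{lemma_hausdorff} enters: it is what lets me bound $P_X(N_r)$ — and therefore the ``number of effective cells'' $|J_N^r|$ weighted by mass — linearly in $r$ via $\lb^d(\{\D_\n\le 3r\})\le 12\mathcal H^{d-1}(\{\n=1/2\})\cdot 3r$, which is valid precisely for $3r\le\d^\ast$; this is why the hypothesis \eqref{s_geq} is imposed, so that the eventual choice of $r$ satisfies $3r\le\d^\ast$.

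Finally I would add the two high-probability bounds (union bound gives probability $\ge 1-2e^{-\t}$) and optimize over $r\ge s/2$. Dropping the two $\t$-free dominated pieces and writing $u:=\t/(s^d n)$, the $r$-dependent part is of the form $c_1 r^{-\g}u + c_2 (r u)^{(\a+\g)/(\a+2\g)}$; differentiating in $r$ gives the optimal $r^\star\asymp u^{-\g/[(1+\g)(\a+\g)+\g^2]}$ up to the constant $\tilde c_{\a,\g,d}$ of \eqref{big_const}, and substituting back collapses both terms to a common order
\begin{align*}
c_{\a,\g,d}\,u^{\frac{(1+\g)(\a+\g)}{(1+\g)(\a+\g)+\g^2}}.
\end{align*}
One must then check the side conditions: \eqref{s_leq} is exactly what guarantees $r^\star\ge s/2$ (so that Lemmas~\ref{lemma_sets}–\ref{lemma_risksplit} apply with this $r^\star$), and \eqref{s_geq} guarantees $3r^\star\le\d^\ast$ (so Lemma~\ref{lemma_hausdorff} applies); with both in force the claimed bound \eqref{oracle_theorem} follows, and tracking constants through \cite[Theorem~7.2]{StCh08} shows $c_{\a,\g,d}$ depends only on $\a,\g,d$ (the other constants $c_{\mathrm{ME}},c_{LC},\mathcal H^{d-1}(\{\n=1/2\})$ being absorbed into $\hat c$ and hence into $\tilde c_{\a,\g,d}$). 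The main obstacle I anticipate is the bookkeeping in the optimization over $r$ together with the two constraints $r\ge s/2$ and $3r\le\d^\ast$: one has to verify that the minimizer of the continuous objective actually lies in the admissible range under hypotheses \eqref{s_leq}–\eqref{s_geq}, rather than at a boundary, and that the variance-bound prefactor $V$ on $N_r$ is correctly derived from the interplay of $\a$ and $\g$ (this is the step where the exponent $\frac{\a+\g}{\a+2\g}$, and ultimately the final exponent $\frac{(1+\g)(\a+\g)}{(1+\g)(\a+\g)+\g^2}$, is pinned down).
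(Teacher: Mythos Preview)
Your overall architecture matches the paper's proof (split via Lemma~\ref{lemma_risksplit}, apply \cite[Theorem~7.2]{StCh08} on $F_r$ and on $N_r$ separately, then optimize over $r$), but there is one genuine gap and a couple of smaller confusions.

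\textbf{The gap: the approximation error on $N_r$.} You bound it by
\[
\inf_{f\in\F}\RP{L_{N_r}}{f}-\RPB{L_{N_r}}\le \int_{\{\D_\n\le 3r\}}|2\n-1|\,dP_X\le (3c_{\mathrm{MNE}}r)^{\b}
\]
and then assert that ``choosing $r$ comparable to $s$ at the end will turn this into the $6(c_{\mathrm{MNE}}s)^{\b}$ term''. That is not what happens. The optimizing $r$ is \emph{not} comparable to $s$: condition~\eqref{s_leq} only guarantees $r\ge s$, and in general $r$ is much larger (it is chosen to balance the two stochastic terms, not the approximation term). With your bound $(3c_{\mathrm{MNE}}r)^\b$ the approximation error would then be substantially worse than $(c_{\mathrm{MNE}}s)^\b$ and would spoil the final inequality. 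The paper avoids this by a sharper observation: using $h_{P,s}\in\F$, the approximation error on $N_r$ is
\[
\sum_{j\in J_N^r}\int_{(X_1\triangle\{h_{P,s}\ge 0\})\cap A_j}|2\n-1|\,dP_X,
\]
and a cell $A_j$ contributes \emph{only} if it meets both $X_1$ and $X_{-1}$ with positive $P_X$-mass (on cells lying entirely in one class, $h_{P,s}$ agrees with the Bayes rule by the same computation you use on $F_r$). Any such cell has diameter $\le s$, so it is contained in $\{\D_\n\le s\}$, and the MNE assumption gives the bound $(c_{\mathrm{MNE}}s)^\b$ \emph{independently of $r$}. This is the step you are missing.

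\textbf{Smaller points.} (i) The remark ``hence $|J_F^r|\le \bar c\, r s^{-d}$'' is wrong: $F_r$ consists of the cells \emph{far} from the boundary and is essentially all of $J$; the paper uses the crude $|J|\le 8^d s^{-d}$ there, exactly as in your displayed bound. The $O(rs^{-d})$ cardinality bound applies only to $J_N^r$. (ii) Lemma~\ref{lemma_hausdorff} bounds the \emph{Lebesgue} measure of $\{\D_\n\le 3r\}$, which is what controls $|J_N^r|$ via volume; the bound on $P_X(N_r)$ used inside the variance argument on $N_r$ comes from the ME assumption, not from Lemma~\ref{lemma_hausdorff}. (iii) Your optimizer should read $r^\star\asymp u^{\g/[(1+\g)(\a+\g)+\g^2]}$ with a positive exponent (small $u$ gives small $r$), not $u^{-\g/[\cdots]}$.
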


The proof shows that the constants $c_{\a,\g,d}$ is given by
\begin{align}\label{constant_theorem_oracle_inequality}
c_{\a,\g,d}:=128 \cdot 8^{d+1}\max\lbrace c_{\text{LC}},2^{\g} \rbrace \cdot \max\left\lbrace \frac{\g(\a+2\g)}{\a+\g},1\right\rbrace \cdot \tilde{c}_{\a,\g,d}^{-\g}.
\end{align}

By choosing an appropriate sequence of $s_n$ in dependence of our data length $n$ and setting a constraint on the MNE $\b$ we state learning rates in the next theorem. Prior to that, we define with $\k:=(1+\g)(\a+\g)$ the positive constant 
\begin{align*}
\tilde{c}_{\a,\b,\g,\t,d}:=\left( \frac{d \cdot \k \cdot c_{\a,\g,d} \cdot \t^{\frac{(1+\g)(\a+\g)}{\k+\g^2}}}{6\b c_{\text{MNE}}^{\b}(\k+\g^2)}\right)^{\frac{\k+\g^2}{\b (\k+\g^2)+d\k}}
\end{align*}
that depends on $\a,\b,\g,\t$ and $d$ and where $c_{\a,\g,d}$ is the constant from (\ref{constant_theorem_oracle_inequality}).

\begin{theorem}\label{Hist_theorem_learning_rate}
Assume that $X$ and $P$ satisfy the assumptions of Theorem \ref{Hist_theorem_oracle_inequality} for $\b \leq \g^{-1}\k$, where $\k:=(1+\g)(\a+\g)$. In addition, assume that the side length $s_n$ in Theorem \ref{Hist_theorem_oracle_inequality} is given by
\begin{align*}
s_n=\tilde{c}_{\a,\b,\g,\t,d} n^{-\frac{\k}{\b(\k+\g^2)+d\k}}.
\end{align*}
Then, there exists a constant $c_{\a,\b,\g,\t,d}>0$ such that for all $n \geq n_0$
\begin{align*}
\RP{L}{h_{D,s_n}}-\RPB{L} \leq c_{\a,\b,\g,\t,d} n^{-\frac{\b\k}{\b(\k+\g^ 2)+d\k}}
\end{align*}
holds with probability $P^n \geq 1-2e^{-\t}$, where $n_0$ and the constant $c_{\a,\b,\g,\t,d}$ only depend on $\t,\a,\b,\g$ and $d$. 
\end{theorem}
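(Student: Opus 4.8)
The plan is to substitute the prescribed side length into the oracle inequality of Theorem~\ref{Hist_theorem_oracle_inequality} and then to verify that this choice is admissible. Writing $\k=(1+\g)(\a+\g)$, that inequality reads
\begin{align*}
\RP{L}{h_{D,s}}-\RPB{L}\leq 6(c_{\text{MNE}}s)^{\b}+c_{\a,\g,d}\Bigl(\tfrac{\t}{s^dn}\Bigr)^{\k/(\k+\g^2)},
\end{align*}
so, treating $n$ and $\t$ as fixed, its right-hand side is, as a function of $s>0$, of the form $a\,s^{\b}+\tilde b\,s^{-d\k/(\k+\g^2)}$ with $a=6c_{\text{MNE}}^{\b}$ and $\tilde b=c_{\a,\g,d}\t^{\k/(\k+\g^2)}n^{-\k/(\k+\g^2)}$. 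Its first-order optimality condition $\b a\,s^{\b}=\tfrac{d\k}{\k+\g^2}\,\tilde b\,s^{-d\k/(\k+\g^2)}$ has the unique solution $s=s_n=\tilde{c}_{\a,\b,\g,\t,d}\,n^{-\k/(\b(\k+\g^2)+d\k)}$; this is exactly where the factor $d\k/(\b(\k+\g^2))$ sitting inside $\tilde{c}_{\a,\b,\g,\t,d}$ comes from. Substituting $s_n$ back, both summands turn out to be of order $n^{-\b\k/(\b(\k+\g^2)+d\k)}$, so the asserted bound holds with $c_{\a,\b,\g,\t,d}$ equal to the resulting total prefactor, which by construction depends only on $\t,\a,\b,\g$ and $d$. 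Since $s_n$ is deterministic, the probability bound $P^n\geq 1-2e^{-\t}$ from Theorem~\ref{Hist_theorem_oracle_inequality} carries over unchanged.

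The step that needs genuine care is checking that $s_n$ meets the side-length constraints (\ref{s_leq}) and (\ref{s_geq}) of Theorem~\ref{Hist_theorem_oracle_inequality}, and this is precisely where the hypothesis $\b\leq\g^{-1}\k$ enters. The lower constraint (\ref{s_geq}) is the easy one: one computes $s_n^{d}\,n=\tilde{c}_{\a,\b,\g,\t,d}^{\,d}\,n^{\b(\k+\g^2)/(\b(\k+\g^2)+d\k)}$, which tends to infinity, so (\ref{s_geq}) holds once $n$ is large. The upper constraint (\ref{s_leq}) forces $s\lesssim(\t/n)^{\g/(\k+\g^2+d\g)}$, so I would compare exponents of $n$: the choice $s_n$ eventually obeys (\ref{s_leq}) precisely when $\tfrac{\k}{\b(\k+\g^2)+d\k}\geq\tfrac{\g}{\k+\g^2+d\g}$, and a one-line rearrangement shows that this is equivalent to $\g\b\leq\k$. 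When the inequality is strict, $s_n$ decays strictly faster than the bound in (\ref{s_leq}) and admissibility holds once $n$ exceeds a threshold; the borderline case $\g\b=\k$, where the two decay rates coincide, reduces to a direct comparison of constants. All the resulting lower bounds on $n$, together with the trivial requirement $s_n\leq 1$, are collected into a single $n_0=n_0(\t,\a,\b,\g,d)$.

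Once admissibility is secured, the remainder is bookkeeping: insert $s_n$ into the oracle inequality, simplify the powers of $n$ using the identity $\b+\tfrac{d\k}{\k+\g^2}=\tfrac{\b(\k+\g^2)+d\k}{\k+\g^2}$, and follow how the nested constants $\tilde{c}_{\a,\g,d}$, $c_{\a,\g,d}$ and $\tilde{c}_{\a,\b,\g,\t,d}$ combine into $c_{\a,\b,\g,\t,d}$. I expect the main obstacle to be exactly this constant-chasing---in particular, making sure the $\t$-dependence hidden inside $s_n$ is propagated consistently, so that the final statement keeps the probability $1-2e^{-\t}$ and the stated dependence of $c_{\a,\b,\g,\t,d}$ and $n_0$ on the parameters. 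No new probabilistic input is required, as all the randomness has already been absorbed into Theorem~\ref{Hist_theorem_oracle_inequality}.
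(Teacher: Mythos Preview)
Your proposal is correct and follows essentially the same approach as the paper: verify that the prescribed $s_n$ satisfies the two side-length constraints (\ref{s_leq}) and (\ref{s_geq}) for sufficiently large $n$---using $\b\leq\g^{-1}\k$ precisely to compare the $n$-exponents in (\ref{s_leq}), exactly as you do---and then substitute $s_n$ into the oracle inequality to read off the rate. The paper carries out the same exponent comparison (packaged as showing a certain quantity $\z_1\geq 0$) and the same substitution, so your outline matches the paper's proof step for step; your additional remarks on the borderline case $\g\b=\k$ and on the requirement $s_n\leq 1$ are points the paper glosses over.
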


The proof of the latter theorem shows that the constant $c_{\a,\b,\g,\t,d}$ is given by
\begin{align*}
c_{\a,\b,\g,\t,d}:=2\max\left\lbrace \frac{d \cdot \k}{\b(\k+\g^2)},1\right\rbrace c_{\a,\g,\d} \cdot\t^{\frac{\k}{\k+\g^2}}\cdot \tilde{c}_{\a,\b,\g,\t,d}^{-\frac{d\k}{\k+\g^2}}.
\end{align*}
Furthermore, we remark that the constraint on the MNE $\b$ in Theorem \ref{Hist_theorem_learning_rate} is set to secure that the chosen side length $s_n$ fulfils assumption (\ref{s_leq}). If we omit this constraint we have to chose another $s_n$. For this $s_n$ we would not be able to balance the two terms in the right-hand side of the excess risk in Theorem \ref{Hist_theorem_oracle_inequality}. Since our examples in Section \ref{section_comparison} fulfil this constraint we did not consider other choices of $s_n$.

To obtain the rates we have to know the parameters describing $P$. However, it is also possible to obtain the rates in Theorem \ref{Hist_theorem_learning_rate} by the following data splitting ansatz, whose concept is similar to the one described in \cite[Chapter~6.5]{StCh08}. Let $(S_n)$ be a sequence of finite subsets $S_n \subset (0,1]$. For a dataset $D:=((x_1,y_1),\ldots,(x_n,y_n))$ we define the sets
\begin{align*}
D_1&:=((x_1,y_1),\ldots,(x_k,y_k)),\\
D_2&:=((x_{k+1},y_{k+1}),\ldots,(x_n,y_n)),
\end{align*}
where $k:=\lfloor \frac{n}{2} \rfloor +1$ and $n\geq 4$. Then, we use $D_1$ as a training set and compute $h_{D_1,s}$ for $s \in S_n$ and use $D_2$ to determine $s^{\ast}_{D_2} \in S_n$ such that
\begin{align*}
s^{\ast}_{D_2}:= \argmin{s \in S_n}\ \Rx{L}{D_2}{h_{D_1,s}}.
\end{align*}
The resulting decision function is $h_{D_1,s^{\ast}_{D_2}}$ and a learning method producing this decision function is called training validation histogram rule (TV-HR). The following lemma shows that the TV-HR learns with the same rate as in Theorem \ref{Hist_theorem_learning_rate} without knowing the parameters describing $P$.

\begin{theorem}\label{theorem_learning_rate_TV}
Assume that $X$ and $P$ satisfy the assumptions of Theorem \ref{Hist_theorem_oracle_inequality} for $\b \leq \g^{-1}\k$, where $\k:=(1+\g)(\a+\g)$. Let $S_n$ be a finite subset of $(0,1]$ such that $S_n$ is a $n^{-1/d}$-net of $(0,1]$. Assume that the cardinality of $S_n$ grows at most polynomially in $n$. Then, the TV-HR learns with rate
\begin{align*}
n^{-\frac{\b\k}{\b(\k+\g^2)+d\k}}.
\end{align*}
\end{theorem}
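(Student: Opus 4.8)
The plan is to deduce the rate from the oracle inequality of Theorem~\ref{Hist_theorem_oracle_inequality} via the hold-out (training--validation) technique of \cite[Chapter~6.5]{StCh08}. Abbreviate the target rate by $\varrho_n:=n^{-\b\k/(\b(\k+\g^2)+d\k)}$, and note that $k=\lfloor n/2\rfloor+1$ and $n-k$ are both of order $n$ for $n\geq4$. One observation will be used repeatedly: by (\ref{def_lc}) we have $\{|2\n-1|<u\}\subset\{\D_{\n}<(c_{\text{LC}}u)^{1/\g}\}$ up to a $P_X$-null set, so the margin exponent $\a$ forces $P$ to have Tsybakov noise exponent $\a/\g$, whence the classification loss satisfies a variance bound with exponent $\th=\a/(\a+\g)$, to be read as $\th=1$ when $\g=0$ (see \cite[Chapter~8]{StCh08}).

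\emph{Step 1 (validation).} Conditionally on $D_1$, the set $\mathcal{F}_{D_1}:=\{h_{D_1,s}:s\in S_n\}$ is a fixed class of $\{-1,1\}$-valued functions of cardinality $|S_n|$, the parameter $s^{\ast}_{D_2}$ picks an empirical risk minimizer over $\mathcal{F}_{D_1}$ for the independent sample $D_2\sim P^{n-k}$, and the variance bound above holds for every $f\in\mathcal{F}_{D_1}$. Applying \cite[Theorem~7.2]{StCh08} to $\mathcal{F}_{D_1}$ yields constants $c_0,c_1$ such that with probability $P^n$ not less than $1-e^{-\t}$,
\begin{align*}
\RP{L}{h_{D_1,s^{\ast}_{D_2}}}-\RPB{L}\leq c_0\inf_{s\in S_n}\bigl(\RP{L}{h_{D_1,s}}-\RPB{L}\bigr)+c_1\Bigl(\tfrac{\t+\log(1+|S_n|)}{n}\Bigr)^{\frac{\a+\g}{\a+2\g}}.
\end{align*}
Since $|S_n|$ grows at most polynomially in $n$, the last term is of order $\bigl((\log n)/n\bigr)^{(\a+\g)/(\a+2\g)}$; a short computation using $\k=(1+\g)(\a+\g)$ and the hypothesis $\b\leq\g^{-1}\k$ shows $\frac{\a+\g}{\a+2\g}\geq\frac{\b\k}{\b(\k+\g^2)+d\k}$ (it reduces to $\b\g\leq d\k$, which is weaker than $\b\g\leq\k$), so this term does not exceed the order of $\varrho_n$.

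\emph{Step 2 (a good candidate in the net).} Let $s_k$ be the side length prescribed by Theorem~\ref{Hist_theorem_learning_rate} for sample size $k$; its exponent $\k/(\b(\k+\g^2)+d\k)$ is strictly below $1/d$ because $\b>0$, hence $s_k\geq 2n^{-1/d}$ for all large $n$. Since $S_n$ is an $n^{-1/d}$-net of $(0,1]$, there is $s'\in S_n$ with $|s'-\tfrac12 s_k|\leq n^{-1/d}$, so $s'\asymp s_k$. For $n$ (thus $k$) large enough, $s'$ satisfies both (\ref{s_leq}) and (\ref{s_geq}) with $n$ replaced by $k$: by the proof of Theorem~\ref{Hist_theorem_learning_rate}, $s_k$ satisfies them for sample size $k$ --- with (\ref{s_leq}) being guaranteed precisely by the hypothesis $\b\leq\g^{-1}\k$ --- and the admissible set of side lengths is stable, up to bounded factors, under the passage from $s_k$ to $s'$ and from $n$ to $k$. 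Applying Theorem~\ref{Hist_theorem_oracle_inequality} to $h_{D_1,s'}$, and using that the first summand of (\ref{oracle_theorem}) is nondecreasing in $s$ while the second is nonincreasing, the balancing computation of Theorem~\ref{Hist_theorem_learning_rate} (together with $k\asymp n$) gives a constant $c_2$ with
\begin{align*}
\RP{L}{h_{D_1,s'}}-\RPB{L}\leq c_2\,\varrho_n
\end{align*}
with probability $P^n$ not less than $1-2e^{-\t}$.

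Combining Steps~1 and~2 by a union bound (over $D_1$, and conditionally over $D_2$) and inserting $s'$ into the infimum from Step~1 gives, for all $n$ large enough and with probability $P^n$ not less than $1-3e^{-\t}$,
\begin{align*}
\RP{L}{h_{D_1,s^{\ast}_{D_2}}}-\RPB{L}\leq c_0c_2\,\varrho_n+c_1\Bigl(\tfrac{\t+\log(1+|S_n|)}{n}\Bigr)^{\frac{\a+\g}{\a+2\g}}\leq c_3\,\varrho_n,
\end{align*}
which is the asserted rate. I expect the main obstacle to be Step~2, namely verifying that an \emph{arbitrary} net point close to $s_k$ --- rather than $s_k$ itself --- still meets the structural constraints (\ref{s_leq})--(\ref{s_geq}) for sample size $k$, and that the bound (\ref{oracle_theorem}) deteriorates only by multiplicative constants when the optimally balanced side length is replaced by a point within a fixed factor of it; this is exactly the place where the hypothesis $\b\leq\g^{-1}\k$ is needed. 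Bounding the validation penalty and tracking the constants is then routine.
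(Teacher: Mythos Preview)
Your proof is correct and follows the same two-step strategy as the paper: a validation oracle inequality for the ERM $h_{D_1,s^{\ast}_{D_2}}$ via \cite[Theorem~7.2]{StCh08} with variance exponent $\th=\a/(\a+\g)$, combined with Theorem~\ref{Hist_theorem_oracle_inequality} applied to the training sample $D_1$ at a net point near the optimal side length. The only cosmetic differences are that the paper brackets $s_n^{\ast}$ by two adjacent net points and invokes Lemma~\ref{Appendix3} (picking up an extra $n^{-\b/d}$ term) rather than selecting a single $s'\asymp s_k$, and that it absorbs the validation penalty into the second oracle term instead of comparing it directly to $\varrho_n$ as you do.
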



\section{Comparison of Rates}\label{section_comparison}

In order to compare our rate obtained in Theorem \ref{Hist_theorem_learning_rate} to the ones known from \cite{AuTs07, BiCoDaDe14, KoKr07} and \cite{StCh08}, we set in the following reasonable sets of common assumptions. Besides our geometric assumption on $X$, namely
\begin{itemize}
\item[(i)] $X_0$ is $(d-1)$-rectifiable with $\mathcal{H}^{d-1}(X_0)>0$ and $X_0=\partial_X X_1=\partial_X X_{-1}$,
\end{itemize}
we make the following two assumptions on $P$:
\begin{itemize}
\item[(ii)] $P$ has \text{ME} $\a \in (0,\infty]$,
\item[(iii)] there exists a $\g \in [0,\infty)$ and constants $c_{\text{LC}},c_{\text{UC}}>0$ such that for all $x \in X$ we have
\begin{itemize}
\item[a)] $c_{\text{LC}} |2\n(x)-1|\geq \D_{\n}^{\g}(x)$,
\item[b)] $c_{\text{UC}} |2\n(x)-1|\leq \D_{\n}^{\g}(x)$.
\end{itemize}
\end{itemize}
Here, assumption $(iii)_{a}$ coincides with the definition in (\ref{def_lc}). Furthermore, assumption $(iii)_{b}$ shows that we have an upper control by $\D_{\n}$ on the noise,  which is up to a constant a kind of inverse to $(iii)_{a}$. Then, \cite[Lemma~8.17]{StCh08} shows under the assumptions $(ii)$ and $(iii)_b$ that $\text{P}$ has $\text{MNE}$ $\b=\a+\g$. Hence, we find by Theorem \ref{Hist_theorem_learning_rate} with $\k:=(1+\g)(\a+\g)$ and a suitable cell-width $s_n$ that $h_{D,s_n}$ learns with a rate with exponent
\begin{align*}
\tfrac{\beta(1+\g)(\a+\g)}{\beta\left[(1+\g)(\a+\g)+\g^ 2\right]+d(1+\g)(\a+\g)}&=\tfrac{(\a+\g)(1+\g)(\a+\g)}{(\a+\g)\left[(1+\g)(\a+\g)+\g^ 2\right]+d(1+\g)(\a+\g)}\\
&=\tfrac{(1+\g)(\a+\g)}{(1+\g)(\a+\g)+\g^2+d(1+\g)}.
\end{align*}
A simple transformation shows that this exponent equals
\begin{align}\label{calc_rate}
\begin{split}
\tfrac{(1+\g)(\a+\g)}{(1+\g)(\a+\g)+\g^2+d(1+\g)}=\tfrac{\a+\g}{\a+\g+\tfrac{\g^2}{1+\g}+d}=\tfrac{\a+\g}{\a+2\g+\tfrac{\g^2}{1+\g}+d-\g}=\tfrac{\a+\g}{\a+2\g+d-\tfrac{\g}{1+\g}}.
\end{split}
\end{align}

First, we compare the rate with exponent (\ref{calc_rate}) to the rate achieved by support vector machines (SVM) for the hinge loss by assuming that $(i),(ii)$ and $(iii)$ hold. For this purpose, \cite[Chapter~8.3 (8.18)]{StCh08} shows that the best possible rate for SVMs using Gaussian kernels is obtained by
\begin{align*}
n^{-\frac{\a+\g}{\a+2\g+d}+\rho},
\end{align*}
where $\rho>0$ is an arbitrary small number. Hence, our rate in (\ref{calc_rate}) is better by $-\frac{\g}{1+\g}$ in the denominator. For the typical value of $\g=1$, indicating a moderate control of noise by the decision boundary, our rate is better by $-1/2$ in the denominator. 

Second, we compare our rates to the ones for certain plug-in classifiers, see \cite{AuTs07, KoKr07}, and to the rates obtained by the classification algorithms, described in \cite{BiCoDaDe14}. In the cases of \cite{AuTs07} and \cite{BiCoDaDe14} the authors assume that $P$ has a noise exponent (NE) $q \in [0, \infty ]$, that is, that there exist a constant $c_{\text{NE}}>0$ such that
\begin{align}\label{NE}
P_X(\lbrace x\in X: |2\n(x) -1|<\e \rbrace ) \leq (c \e)^q 
\end{align}
for all $\e > 0$, c.f.\ \cite[Definition~8.22]{StCh08}. Since (\ref{NE}) measures the amount of critical noise and does not locate noise we call this exponent noise exponent in contrast to \citep{MaNe06} and the mentioned authors, who call this exponent margin exponent. The authors of \cite{KoKr07} assume a weaker version of (\ref{NE}) on $P$, however, the latter implies this weak version, see \cite[Section~2]{DoeGyWa15}. We compare our rates under a different assumption set as in the first comparison to SVMs. To this end, we impose in addition to $(i),(ii)$ and $(iii)_a$ that
\begin{itemize}
\item[(iv)] $\n$ is H\"older-continuous for some $\g \in (0,1]$.
\end{itemize}
Then, we find under condition $(iv)$ with Lemma \ref{Appendix1} that assumption $(iii)_b$ is fulfilled with exponent $\g$ and thus we assume in the following that $(iii)_a$ holds for the same $\g$.
Note that in $(iv)$ we have $\g \in (0,1]$, whereas in the case of $(iii)$ we have $\g \in [0,\infty)$. Moreover, under assumptions $(ii)$ and $(iii)_a$ we find with \cite[Exercise~8.5]{StCh08} that the noise exponent in (\ref{NE}) holds with 
\begin{align}\label{q_is_a_divided_g}
q=\frac{\a }{\g}.
\end{align}
By assuming $(i),(ii), (iii)_a$ and $(iv)$ our rate yields the same exponent as in (\ref{calc_rate}), that is 
\begin{align}\label{calc_rate_gtilde}
\tfrac{\a+\g}{\a+2\g+d-\tfrac{\g}{1+\g}}.
\end{align}
Furthermore, the plug-in classifiers based on kernel, partitioning or nearest neighbor regression estimates shown in \cite[Theorem~1,~3 and 5]{KoKr07} yield under these assumptions and thus in particular with (\ref{q_is_a_divided_g}) the rate
\begin{align}\label{rate1_KoKr}
n^{-\frac{\a+\g}{\a+3\g+d}},
\end{align}
such that our rate is better by $-\frac{\g(2+\g)}{1+\g}$ in the denominator. The authors were able to improve the rate given in (\ref{rate1_KoKr}) by making in addition the assumption that $P_X$ has a density with respect to the Lebesgue measure, which is bounded away from zero, see \cite[Theorem~2,~4 and 6]{KoKr07}. Under this condition and  $(i),(ii), (iii)_a$ and $(iv)$ the classifiers yield the rate
\begin{align*}
n^{-\frac{\a+\g }{2\g +d}}.
\end{align*}
Hence, our rate with exponent (\ref{calc_rate}) is better if our margin exponent $\a$ fulfils $\a<\frac{\g}{1+\g}$. We have small margin exponent $\a$, for example, if we have much mass around the decision boundary, that is, the density is unbounded in this region. We remark that the authors obtained rates under the H\"older assumption $(iv)$, a weak margin assumption, and improved them as discussed above by making the assumption that $P_X$ has a density which is bounded away from zero.

Next, we compare our rates to the ones obtained by the classifier resulting from the classification method given in \cite[Section~5]{BiCoDaDe14}. Therefore, we consider in addition to $(i), (iii)_a$ and $(iv)$ for example that
\begin{itemize}
\item[(v)] $P_X$ is the uniform distribution. 
\end{itemize}
Under the condition that $(i)$ and $(v)$ hold, we find with Lemma \ref{lemma_hausdorff} that assumption $(ii)$ is fulfilled for $\a=1$. Then, we obtain in (\ref{q_is_a_divided_g}) that $q=\frac{1}{\g}$. Again, we find with Lemma \ref{Appendix1} that assumption $(iii)_b$ is fulfilled with exponent $\g $ and assume again that $(iii)_a$ holds for the same $\g $.  Hence, the conditions $(i)$ and $(iii)_a, (iv)$ and $(v)$ yield in (\ref{calc_rate_gtilde}) a rate with exponent
\begin{align}\label{rate_meins}
\tfrac{1+\g }{1+2\g +d-\tfrac{\g }{1+\g }}
\end{align}
for our method. Furthermore, \cite[Corollary~5.2(ii)]{BiCoDaDe14} shows that the classifier mentioned in \cite[Section~5]{BiCoDaDe14} yields the rate
\begin{align}\label{rate1_BiCoDaDe}
\left(\frac{(\log n)^{\frac{1}{2+d}}}{n}\right)^{\frac{1+\g }{2\g +d}}.
\end{align} 
Hence, our rate is worse by $\frac{1}{1+\g}$. However, the rate given in (\ref{rate1_BiCoDaDe}) is also comparable under a more generic assumption set in which we do not fix an example of $P_X$. Indeed, if we assume the conditions $(i),(ii),(iii)_a$ and $(iv)$, then, our rate with exponent (\ref{calc_rate_gtilde}) holds and \cite[Corollary~5.2(i)]{BiCoDaDe14} shows that their classifier obtains the rate
\begin{align}\label{rate2_BiCoDaDe}
\left(\frac{\log n}{n}\right)^{\frac{\a+\g }{\a+2\g +d}}.
\end{align}
Thus, our rate with exponent (\ref{calc_rate_gtilde}) is again better by $-\frac{\g }{1+\g }$ in the denominator.

Finally, we compare our rates to the ones obtained for the plug-in classifier defined by \cite[(4.1) with $p=\infty$]{AuTs07} under the conditions $(i),(iii)_a, (iv)$ and 
\begin{itemize}
\item[(vi)] $P_X$ has a uniformly bounded density. 
\end{itemize}
Analogously as above one can show with $(i)$ and $(vi)$ with Lemma \ref{lemma_hausdorff} that we have MNE $\a=1$. 
Under these conditions our rate with exponent (\ref{rate_meins}) holds. The classifier in \cite[(4.1) with $p=\infty$]{AuTs07} achieves the rate
\begin{align}\label{rate_AuTs}
n^{-\frac{1+\g }{1+2\g +d}}
\end{align}
in expectation and we find that our rate is better by $-\frac{\g }{1+\g}$ in the denominator. We remark at this point that  \citep[Theorem 4.1 and 4.3]{AuTs07} proved that the classifier achieves this rate under a different assumption set, namely under $(iv),(vi)$ and the assumption that $P$ has NE $q \in [0, \infty ]$. The classifier then achieves the rate
\begin{align}\label{rate_AuTs_allg}
n^{-\frac{\g(q+1)}{\g(q+2)+d}}
\end{align}
and for this set of assumptions the rate is optimal (in a minimax sense). Our assumptions, namely $(i),(iii)_a, (iv)$ and $(vi)$ imply the assumptions of $[1]$, but, this is not a contradiction since our assumptions are a subset of the assumptions of $[1]$.

Our improvement arises from assumption $(iii)_a$ since it forces critical noise ($\n \approx 1/2$) to be located close to the decision boundary and as we will see down below this assumption has an essential influence on the NE $q$. To be more precisely, there are two sources for slow learning rates. The first one is the approximation error around the decision boundary, the second one is the existence of critical noise. Assumption $(iii)_a$ forces both to be in the same region such that both effects cannot independently occur, which in turn leads to better rates compared to \citep{AuTs07}. In other words, with assumption $(iii)_a$ we exclude distributions that have regions of critical noise that are far away from the decision boundary. Be aware that this does not mean that we consider only distributions without noisy regions bounded away from the decision boundary. In Fig.\ \ref{figure1} we present two examples which make this situation more clear. Areas of noise that are, for example, located in the set $X_1$ in Fig.\ \ref{figure1} (a) resp.\ in the set $X_{-1}$ in Fig.\ \ref{figure1} (b) are still allowed under $(iii)_a$ whereas the areas of critical noise in the particular other set are permitted.
\begin{figure}
\begin{subfigure}[h]{0.7\linewidth}
\includegraphics[width=\linewidth]{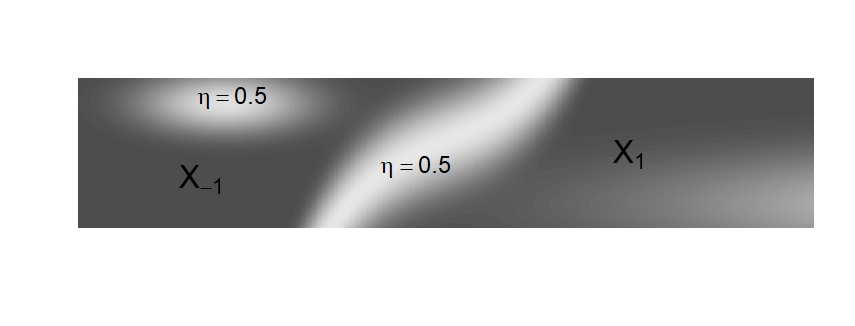}
\caption{The brighter the region the more closer $\n$ is to $1/2$.  The decision boundary (within the bright stripe) is located in the middle of the picture. Far away from the decision boundary we locate in both sets $X_{-1}$ and $X_1$ noise (upper left and lower right corner), but only in the upper left corner the critical level $\n=1/2$ is reached.}
\end{subfigure}
\hfill
\begin{subfigure}[h]{0.7\linewidth}
\includegraphics[width=\linewidth]{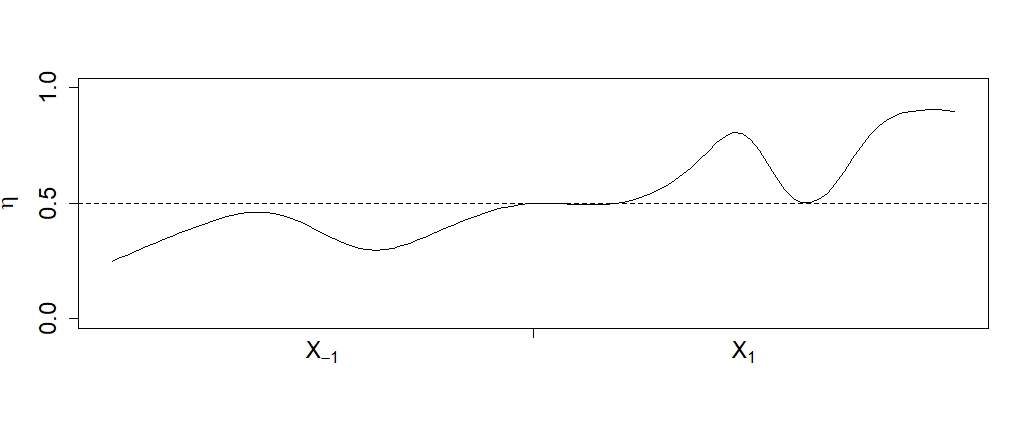}
\caption{Far away from the decision boundary, in $X_1$, we locate a region of critical noise. In  $X_{-1}$ we have noise too, but the critical level $\n=1/2$ is not reached.}
\end{subfigure}%
\caption{Examples of $\eta$ with regions of critical noise ($\n \approx 1/2$) far away from the decision boundary. The size of these regions has an essential influence on the NE $q$. Since assumption $(iii)_a$ disallow areas of critical noise far away from the decision boundary we obtain a better noise exponent and hence, better rates. Note that areas of noise as in (a) in the lower right corner or in (b) in the lower left corner are still allowed.}
\label{figure1}
\end{figure} 
 
To make this heuristic argument more precise we take a look at Theorem \ref{Hist_theorem_oracle_inequality} and its proof and show that if we omit assumption $(iii)_a$ and thus consider the assumptions taken in \citep{AuTs07}, we match the optimal rate in (\ref{rate_AuTs_allg}). To this end, we consider in addition to $(i)$ the above mentioned assumptions of \citep{AuTs07}, that is $(iv),(vi)$ and the NE $q$. Since we do not assume $(iii)_a$ we cannot use this assumption to obtain a variance bound 
on the set $F_r$, which is bounded away from the decision boundary (e.g., Lemma \ref{Lemma_var_bound}). Hence, the separation technique we used in the proof would make no sense any more, but we are able to bound the excess risk on the whole set by Part 2 in the proof of Theorem \ref{Hist_theorem_oracle_inequality}, where we bounded the excess risk on the set $N_r$ that is close to the decision boundary. This situation corresponds to the fact that our set $F_r$ is empty (letting go $r \to \infty$). There are two points that change in Part 2 of the proof. First, we have no variance bound as in (\ref{calc_var_bound}), but we can apply \cite[Theorem~8.24]{StCh08}, a general variance bound, and obtain
\begin{align*}
\E_P (h^N_{f_0})^2 \leq \tilde{V} \left(\E_P h^N_{f_0}\right)^{\th},
\end{align*}
where $\th:=\frac{q}{q+1}$ and where $\tilde{V}$ is a positive constant. Second, we can bound the cardinality $|\F|$ in (\ref{calc_log_bound}) as in Part 1 and yield with some calculations as in Part 3 for the overall excess risk 
\begin{align*}
\RP{L}{h_{D,s}}-\RPB{L}
 &\leq 6\left(c_{\text{MNE}}s \right)^{\b}+ c \left( \frac{\t}{s^{d}n}\right)^{\frac{1}{2-\th}},
\end{align*}
where $c>0$ is a constant and $\t\geq 1$. Then, minimizing over $s$ yields for our learning method the rate
\begin{align*}
n^{-\frac{\g(q+1)}{\g(q+2)+d}},
\end{align*}
which matches the in \citep[Theorem 4.1 and 4.3]{AuTs07} proven optimal rate (\ref{rate_AuTs_allg}). We further remark that instead of the H\"older assumption $(iv)$ the weaker assumption $(iii)_b$ is sufficient for Theorem \ref{Hist_theorem_oracle_inequality} and the above modified one. 

If we consider now in addition to the assumptions in \citep{AuTs07} that $(i)$ and $(iii)_a$ hold, we find that our rate improves immediately since we can directly apply Theorem \ref{Hist_theorem_oracle_inequality} and thus obtain exactly the rate with exponent (\ref{rate_meins}) which is better by $-\frac{\g }{1+\g }$. In summary, taking in consideration assumption $(iii)_a$ influences the noise exponent in a good way since we exclude distributions that have critical noise far away from the decision boundary. This leads to better learning rates.

Finally, we remark that for our results as well as for the results from \cite{AuTs07,BiCoDaDe14, KoKr07} and \cite{StCh08} less assumptions are sufficient and in the comparisons above we tried to formulate reasonable sets of common assumptions.

\section{Proofs}
\label{section_proofs}

\begin{proofof}{Lemma \ref{lemma_hausdorff}}
For a set $T \subset X$ and $\d>0$ we define as in \cite{Steinwart15a} the sets
\begin{align*}
T^{+\d}&:= \set{x \in X}{d(x,T)\leq \d},\\
T^{-\d}&:= X \setminus (X \setminus T)^{+\d}.
\end{align*}
Since $X_1:=\set{x\in X}{\n(x)\leq 1/2}$ is bounded and measurable, 
we find with \cite[Lemma~A.10.3]{Steinwart15a} and the proof of \cite[Lemma~A.10.4(ii)]{Steinwart15a} that there exists a $\d^{\ast}>0$, such that for all $\d \in (0,\d^{\ast}]$ we have
\begin{align}\label{app}
\lb^{d}(X_1^{+\d} \setminus X_1^{-\d}) \leq 4 \mathcal{H}^{d-1}(\partial_X X_1) \cdot \d = 4\mathcal{H}^{d-1}(X_0) \cdot \d.
\end{align} 
Next, we show that
\begin{align}\label{tm}
\set{x \in X}{\D_{\n}(x) \leq \d} \subset X_1^{+\d} \setminus X_1^{-\d}\cup X_0.
\end{align}
For this purpose, we remark that according to (\ref{delta}) we have
\begin{align*}
&\set{x \in X}{\D_{\n}(x) \leq \d}\\
&= \set{x\in X_{1}}{d(x,X_{-1})\leq \d} \cup \set{x\in X_{-1}}{d(x,X_{1})\leq \d} \cup X_0.
\end{align*}
Let us first show that $\set{x\in X_{1}}{d(x,X_{-1})\leq \d} \subset X_1^{+\d} \setminus X_1^{-\d}$. To this end, consider an $x\in X_{1}$ with $d(x,X_{-1})\leq \d$, where we check at once that $x \in X_1^{+\d}$. Now, assume that $x \in X_1^{-\d}=X \setminus (X \setminus X_1)^{+\d}$. Then, we find that 
$x \notin (X \setminus X_1)^{+\d}$ such that $d(x,X \setminus X_1)=d(x,X_{-1}\cup X_0)>\d$. Hence, $x \notin X_1^{-\d}$.
Next, let us show that $\set{x\in X_{-1}}{d(x,X_{1})\leq \d} \subset X_1^{+\d} \setminus X_1^{-\d}$. To this end, consider an $x \in X_{-1}$ with $d(x,X_{1})\leq \d$. Then, it is clear that $x \in X_1^{+\d}$ by definition of $X_1^{+\d}$. Furthermore, $x \notin X_1^{-\d}$ since $X_1^{-\d}=X \setminus (X_{-1})^{+\d} \subset X_1$. Having showed (\ref{tm}), we find together with the fact that $\lb^d (X_0)=0$ since $X_0$ is $(d-1)$- rectifiable that
\begin{align*}
\lb^{d}( \set{x \in X}{\D_{\n}(x) \leq \d} )\leq \lb^{d}(X_1^{+\d} \setminus X_1^{-\d}).
\end{align*}
Finally, with (\ref{app}) and the fact that $X_0=\partial_X X_1$ we find that
\begin{align*}
\lb^{d}( \set{x \in X}{\D_{\n}(x) \leq \d} ) &\leq \lb^{d}(X_1^{+\d} \setminus X_1^{-\d})\\
&\leq 4\mathcal{H}^{d-1}(X_0) \cdot \d\\
&= 4\mathcal{H}^{d-1}(\set{x \in X}{\n(x)=1/2}) \cdot \d
\end{align*}
for all $\d \in (0, \d^{\ast}]$.
\end{proofof}

\begin{proofof}{Lemma \ref{lemma_sets}}
\begin{itemize}
\item[i)] We assume for $A_j$ with $j \in J_F^r$ that we have an $x_1 \in A_j \cap X_1 \neq \emptyset$ and an $x_{-1} \in A_j \cap X_{-1} \neq \emptyset$. Then, the connecting line $\overline{x_{-1}x_1}$ from $x_{-1}$ to $x_1$ is contained in $A_j$ since $A_j$ is convex and we have $\inorm{x_{-1}-x_1}\leq s$. Moreover, since $\D_{\n}(x) \geq r$ for all $x\in F_r$ we have that $x \in X_1 \cup X_{-1}$. Next, pick an $m>1$ such that
\begin{align*}
t_0=0, \qquad t_m=1, \qquad t_i=\frac{i}{m}
\end{align*}
and
\begin{align*}
x_i:=t_ix_{-1} + (1-t_i)x_1
\end{align*}
for $i=0,\ldots,m$. Clearly, $x_i \in \overline{x_{-1}x_1}$ and $x_i \in X_{-1}\cup X_1$. Since $x_0 \in X_1$ and $x_m \in X_{-1}$, there exists an $i$ with $x_i \in X_1$ and $x_{i+1} \in X_{-1}$ and we find that
\begin{align*}
\inorm{x_i-x_{i+1}} \geq \D_{\n}(x_i) \geq r.
\end{align*} 
On the other hand,
\begin{align*}
\inorm{x_i-x_{i+1}}=\frac{1}{m}\inorm{x_{-1}-x_{1}}\leq\frac{s}{m}\leq \frac{2r}{m}
\end{align*}
such that $r \leq \frac{2r}{m}$, which is not true for $m \geq 3$. Hence, we can not have an $x_1 \in A_j \cap X_1 \neq \emptyset$ and an $x_{-1} \in A_j \cap X_{-1} \neq \emptyset$ for $j \in J_F^r$.
\item[ii)] We define the set of indices
\begin{align*}
J_C^r := \set{j \in J}{\exists \,\tilde{x} \in A_j : \D_{\n}(\tilde{x}) < r}
\end{align*}
 and define the set
\begin{align*}
C_r &:= \bigcup_{j\in J_C^r} A_j.
\end{align*}
Since $X \subset F_r \cup C_r$, it suffices to show that $C_r \subset N_r$. To show the latter we fix an $x \in C_r$.
If $x \in X_0$ we immediately have $\D_{\n}(x)=0< 3r$, hence we assume w.l.o.g.\ that $x \in X_1$. Then, there exists a $j \in J_C^r$ such that $x \in A_j$. Furthermore, there exists an $x^{\ast} \in A_j$ with $\D_{\n}(x^{\ast})<r$ and we find with $X_0=\partial_X X_1=\partial_X X_{-1}$ that
\begin{align*}
\D_{\n}(x)&=\inf_{x'\in X_{-1}} \inorm{x- x'}\\
			&\leq \inf_{x'\in X_{-1}} \left( \inorm{x- x^{\ast}}+\inorm{x^{\ast}- x'} \right) \\
			&\leq s+\D_{\n}(x^{\ast})\\
			&< s+r,
\end{align*}
where $\inorm{\cdot}$ is the supremum norm in $\R^d$. Since $s \leq 2r$, it follows that $\D_{\n}(x) \leq 3r$ and therefore $x \in N_r$.
\end{itemize}
\end{proofof}

\begin{proofof}{Lemma \ref{lemma_risksplit}}
Under the assumptions of Lemma \ref{lemma_sets} ii) we find that $X \subset N_r \cup F_r$. Since the excess risk is non-negative we then have
\begin{align*}
&\Rx{L}{P}{f}-\RPB{L}\\
&=\int_{X \times Y} L(y,f(x)-f^{\ast}(x)) \,dP(x,y)\\
&=\int_{X}\int_{Y}L(y,f(x)-f^{\ast}(x)) \,P(dy|x)dP_X(x)\\
&\leq \int_{N_r \cup F_r}\int_{Y}L(y,f(x)-f^{\ast}(x)) \,P(dy|x)dP_X(x)\\
&=\int_{N_r}\int_{Y}L(y,f(x)-f^{\ast}(x)) \,P(dy|x)dP_X(x)\\
&\quad+\int_{F_r}\int_{Y}L(y,f(x)-f^{\ast}(x)) \,P(dy|x)dP_X(x)\\
&=\left(\RP{L_{N_r}}{h_{D,s}} -\RPB{L_{N_r}}\right)+ \left( \RP{L_{F_r}}{h_{D,s}}-\RPB{L_{F_r}}\right).
\end{align*}\end{proofof}

\begin{proofof}{Lemma \ref{lemma_erm_subset}}
For $f \in \mathcal{F}$ we have
\begin{align*}
&\Rx{L_{T_J}}{D}{f}&\\
&=\int_{X \times Y} L_{T_J}(x,y,f(x)) \,dD(x,y)\\
&=\sum_{j \in J} \int_{A_j \times Y} \lclass(y,f(x)) \,dD(x,y).
\end{align*}
Next, we take a closer look at the risk on a single cell $A_j$ for $j \in J$. That is, 
\begin{align*}
\int_{A_j \times Y} \lclass(y,f(x))  \,dD(x,y)=\frac{1}{n}\sum_{i=1}^n \eins_{A_j}(x_i)\eins_{y_i \neq c_j},
\end{align*}
where $c_j \in \lbrace -1,1\rbrace$ is the label of the cell $A_j$. The risk on a cell is the smaller the less often we have $y_i \neq c_j$ such that the best classifier on a cell is the one which decides by majority. This is true for the histogram rule by definition. Since the risk is zero on $A_j$ with $j \not\in J$, the histogram rule minimizes the risk with respect to $L_{T_J}$.
\end{proofof}

\begin{proofof}{Lemma \ref{Lemma_var_bound}}
We define $h _f:=L_{F_r} \circ f-L_{F_r} \circ f^{\ast}_{L,P}$ for a measurable $f \colon X \to \lbrace -1,1 \rbrace$. Since $(L_{F_r}\circ f-L_{F_r}\circ f^{\ast}_{L,P})^2=\eins_{F_r}\frac{|f-f^{\ast}_{L,P}|}{2}$ we obtain 
\begin{align*}
&\E_P (h_f -\E_P h_f)^2\\
 &\leq \E_P (h_f)^2\\
 &=\E_P (L_{F_r}\circ f-L_{F_r}\circ f^{\ast}_{L,P})^2\\
 &=\frac{1}{2}\E_P \eins_{F_r}|f-f^{\ast}_{L,P}|.
\end{align*}
For $x \in F_r$ we have $\D_{\n}(x) \geq r$ and thus we find with our lower-control assumption that
\begin{align*}
r^{\g} \leq \D_{\n}^{\g}(x) \leq c_{LC} |2\n(x)-1|
\end{align*}
and therefore
\begin{align*}
1 \leq c_{LC}r^{-\g}|2\n(x)-1|.
\end{align*}
By using $\eins_{F_r}\frac{|f-f^{\ast}|}{2}= \eins_{(X_{-1} \triangle \lbrace f < 0 \rbrace)\cap F_r}$, where $\triangle$ denotes the symmetric difference defined by $C \triangle D := (C \setminus D) \cup (D \setminus C)$ for sets $C,D \subset X$ and by using Lemma \ref{Appendix1} we obtain for the variance bound
\begin{align*}
\E_P (h_{f} -\E_P h_{f})^2 &\leq \frac{1}{2} \int \eins_{F_r}(x)|f(x)-f^{\ast}_{L,P}(x)| \dxy{P_X}{x}\\
											&\leq \frac{c_{LC} }{2r^{\g}} \int \eins_{F_r}(x)|2\n(x)-1| |f(x)-f^{\ast}_{L,P}(x)| \dxy{P_X}{x}\\
											&= \frac{c_{LC}}{r^{\g}} \int_{(X_{-1} \triangle \lbrace f < 0 \rbrace)\cap F_r}  |2\n(x)-1| \dxy{P_X}{x}\\
											&= \frac{c_{LC}}{r^{\g}} (\RP{L_{F_r}}{f}-\RPB{L_{F_r}})\\
											&= \frac{c_{LC}}{r^{\g}}  \E_P h_{f}.\tag*{\qedhere}
\end{align*}
\end{proofof}

\begin{proofof}{Theorem \ref{Hist_theorem_oracle_inequality}}
We define the set of cubes $N_r$ and $F_r$ as in (\ref{def_A}), (\ref{def_B}) for the choice of 
\begin{align}\label{def_r}
r:=\tilde{c}_{\a,\g,d}\left( \frac{\t}{s^dn} \right)^{\frac{1-\th}{1+\g(2-\th)}},
\end{align}
where
\begin{align}\label{def_th}
\th:=\frac{\a}{\a+\g}.
\end{align} 
With (\ref{def_r}) we find that $s \leq r$. To see the latter, we remark that
\begin{align*}
s \leq \tilde{c}_{\a,\g,d}\left( \frac{\t}{s^dn} \right)^{\frac{1-\th}{1+\g(2-\th)}} & \Longleftrightarrow  s^{\frac{1+\g(2-\th)+d(1-\th)}{1+\g(2-\th)}} \leq \tilde{c}_{\a,\g,d}\left( \frac{\t}{n} \right)^{\frac{1-\th}{1+\g(2-\th)}}\\
&\Longleftrightarrow  s \leq \left(\tilde{c}_{\a,\g,d}\left( \frac{\t}{n} \right)^{\frac{1-\th}{1+\g(2-\th)}}\right) ^{\frac{1+\g(2-\th)}{1+\g(2-\th)+d(1-\th)}}
\end{align*}
and conclude by replacing $\th$ by (\ref{def_th}) that $s \leq r$ holds if
\begin{align*}
s \leq \tilde{c}_{\a,\g,d}^{\frac{(1+\g)(\a+\g)+\g^2}{(1+\g)(\a+\g)+\g^2+d\g}}\left(\frac{\t}{n} \right)^{\frac{\g}{(1+\g)(\a+\g)+\g^2+d\g}},
\end{align*} 
which equals (\ref{s_leq}). Hence, we are able to split the excess risk $\RP{L}{h_{D,s}}-\RPB{L}$ according to Lemma \ref{lemma_risksplit} by
\begin{align}\label{risksplit}
\begin{split}
&\RP{L}{h_{D,s}}-\RPB{L}\\ 
&\leq \left(\RP{L_{N_r}}{h_{D,s}} -\RPB{L_{N_r}}\right)+ \left( \RP{L_{F_r}}{h_{D,s}}-\RPB{L_{F_r}} \right).
\end{split}
\end{align}
 The rest of the proof is structured in three parts, where we establish error bounds on $N_r$ and $F_r$ in the first two parts and combine the results obtained in the third and last part of the proof. In the following we write $N:=N_r$ and $F:=F_r$ and keep in mind, that these sets depend on a parameter $r$. Furthermore, we write $h_{D}:=h_{D,s}$.

\textit{Part 1:} In the first part we establish an oracle inequality for $\RP{L_F}{h_{D,s}}-\RPB{L_F}$. Therefore we define $h^F_f:=L_F \circ f-L_F \circ f^{\ast}_{L_F,P}$ and find that 
\begin{align*}
\|h^F_{f}\|_{\infty}=\|L_F \circ f-L_F \circ f^{\ast}_{L_F,P}\|_{\infty} \leq 1
\end{align*}
for all $f \in \F$. Furthermore, with Lemma \ref{Lemma_var_bound} we obtain
\begin{align}\label{calc_VB_B2}
\begin{split}
\E_P (h^F_{f})^2 &\leq \frac{c_{\text{LC}}}{r^{\g}}\E_P h^F_{f} \leq \frac{c_1}{r^{\g}}\E_P h^F_{f},
\end{split}
\end{align}
where $c_1:=\max \lbrace c_{\text{LC}},2^{\g} \rbrace$. We observe that $r^{\g} \leq c_1$, since with assumption (\ref{s_geq}), where we rewrite the exponent by $\frac{(1+\g)(\a+\g)+\g^2}{\g}=\frac{1+\g(2-\th)}{1-\th}$, we find
\begin{align*}
r&=\tilde{c}_{\a,\g,d}\left( \frac{\t}{s^dn} \right)^{\frac{1-\th}{1+\g(2-\th)}}\\
 &\leq \tilde{c}_{\a,\g,d}\left(  \left( \frac{\min\lbrace \frac{\d^{\ast}}{3},1\rbrace}{\tilde{c}_{\a,\g,d}} \right)^{\frac{1+\g(2-\th)}{1-\th}}\right)^{\frac{1-\th}{1+\g(2-\th)}}\\
 &= \min\left\lbrace \frac{\d^{\ast}}{3},1 \right\rbrace\\
 &\leq 1
\end{align*} 
and therefore $r^{\g} \leq 2^{\g} \leq c_1$. As we conclude from Lemma \ref{lemma_erm_subset} that $h_D$ is an empirical risk minimizer over $\F$ for the loss $L_F$, we are able to use \cite[Theorem~7.2]{StCh08}, an improved oracle inequality for ERM. We obtain for all fixed $\t \geq 1$ and $n\geq 1$ that
\begin{align*}
\RP{L_F}{h_{D}}-\RPB{L_F}<6 (\RPxB{L_F}{\F}-\RPB{L_F})+  \frac{32 c_1 (\text{log}(|\F|+1)+\t)}{r^{\g}n}
\end{align*}
holds with probability $P^n \geq 1-e^{-\t}$, where $\RPxB{L_F}{\F}:= \inf_{f\in \F}\RP{L_F}{f}$. Next, we refine the right-hand side of this oracle inequality. Obviously we have $|\F| \leq 2^{|J|}$. We bound the the cardinality $|J|$ by using a volume comparison argument. To this end, we define the set $\tilde{J} := \lbrace \,j \geq 1 \,|\,A_j \cap 2\left[-1,1 \right]^d  \neq \emptyset \,\rbrace$ and observe that \mbox{$\bigcup_{j\in J} A_j \subset \bigcup_{j\in \tilde{J}} A_j \subset 4B_{\ell^d_{\infty}}$}. 
Then, 
\begin{align*}
|J| s^d = \lb^d \left( \bigcup_{j\in J} A_j \right) \leq \lb^d \left( \bigcup_{j\in \tilde{J}} A_j \right)   \leq \lb^d  \left( 4B_{\ell^d_{\infty}} \right) = 8^d,
\end{align*}
such that we deduce with $|J| \leq 8^ds^{-d}$ that
\begin{align*}
\text{log}(|\F|+1)  &\leq \text{log}(2^{8^ds^{-d}}+1)\\
					&\leq \text{log}(2\cdot 2^{8^ds^{-d}})\\
					&= \text{log}(2^{8^ds^{-d}+1})\\
					&= (8^ds^{-d}+1)\text{log}(2)\\
					&\leq 8^ds^{-d}+1\\
					&\leq 8^{d+1}s^{-d}.
\end{align*}
Thus,
\begin{align}\label{oracle_B_withoutAE}
\RP{L_F}{h_{D}}-\RPB{L_F}<6 (\RPxB{L_F}{\F}-\RPB{L_F})+ \frac{32 c_1(8^{d+1}s^{-d}+\t)}{r^{\g} n}
\end{align}
holds with probability $P^n \geq 1-e^{-\t}$.

Finally, we have to bound the \textit{approximation error} $\RPxB{L_F}{\F}-\RPB{L_F}=\inf_{f\in \F}\RP{L_F}{f}-\RPB{L_F}$. We find with $h_{P,s} \in \F$ and Lemma \ref{Appendix1} that 
\begin{align}\label{approx_error_B}
\begin{split}
\RPxB{L_F}{\F}-\RPB{L_F}&\leq \RP{L_F}{h_{P,s}}-\RPB{L_F}\\
						&= \int_{(X_1 \triangle \lbrace h_{P,s}\geq 0 \rbrace)\cap F}|2\n-1|\,dP_X\\
&= \sum_{j \in J_F^r} \int_{(X_1 \triangle \lbrace h_{P,s}\geq 0 \rbrace)\cap A_j}|2\n-1|\,dP_X\\
&= 0,
\end{split}
\end{align}
since $P_X((X_1 \triangle \lbrace h_{P,s}\geq 0 \rbrace)\cap A_j)=0$ for each $j \in J_F^r$. To see the latter, we first remark that the latter set contains those $x \in A_j$ for that either $h_{P,s}(x) \geq 0$ and $\n(x) \leq 1/2$ or $h_{P,s}(x) < 0$ and $\n(x) > 1/2$. Since we have $A_j \subset X_{-1} \cup X_1$ we can ignore the case $\n(x)=1/2$. Furthermore, we know by Lemma \ref{lemma_sets} i) that either $A_j \cap X_{-1} = \emptyset$ or
$A_j \cap X_{1} = \emptyset$. Let us first consider the case $A_j \cap X_{-1} = \emptyset$ and thus $A_j \subset X_1$. According to the definition of the histogram rule, cf. (\ref{def_hist}), we find for all $x \in A_j$ that $h_{P,s}(x)=1$, since
\begin{align*}
&f_{P,s}(x)\\
&= P(A_j(x)\times \lbrace 1 \rbrace)-P(A_j(x) \times \lbrace -1 \rbrace )\\
&=\int_{A_j}\int_{Y} \eins_{A_j \times \lbrace 1 \rbrace}(x,y) P(dy|x)\dxy{P_X}{x}\\
&\qquad-\int_{A_j}\int_{Y} \eins_{A_j \times \lbrace -1 \rbrace}(x,y) P(dy|x)\dxy{P_X}{x}\\
&=\int_{A_j}\eins_{A_j \times \lbrace 1 \rbrace}(x,1) \n(x)\dxy{P_X}{x}-\int_{A_j}\eins_{A_j \times \lbrace -1 \rbrace}(x,-1)(1-\n(x))\dxy{P_X}{x} \\
&= \int_{A_j}2\eta(x)-1 \dxy{P_X}{x}\\
&\geq 0.
\end{align*}
Obviously we have $\n(x) \geq 1/2$ and $h_{P,s}(x)=1$ for all $x \in A_j$. Analogously we can show for cells with $A_j \cap X_{1} = \emptyset$ for $j \in J_F^r$ that $\n(x) \leq 1/2$ and $h_{P,s}(x)=-1$ for all $x \in A_j$. Hence, $P_X((X_1 \triangle \lbrace h_{P,s}\geq 0 \rbrace)\cap A_j)=0$ for all $j \in J_F^r$ and the approximation error vanishes on the set $F$.

Altogether, for the oracle inequality on $F$ we obtain with (\ref{oracle_B_withoutAE}) and (\ref{approx_error_B}) that 
\begin{align}\label{ineq_oracle_B}
\RP{L_F}{h_{D}}-\RPB{L_F}< \frac{32 c_1( 8^{d+1}s^{-d}+\t)}{r^{\g} n}
\end{align}
holds with probability $P^n \geq 1-e^{-\t}$.

\emph{Part 2:} In the second part we establish an oracle inequality for $\RP{L_N}{h_{D}}-\RPB{L_N}$, again by using \cite[Theorem~7.2]{StCh08}. Analogously to Part 1 we define $h^N_f:=L_N \circ f-L_N\circ f^{\ast}_{L_N,P}$ for $f \in \F$ and find $\|h^N_{f}\|_{\infty} \leq 1$. Since $(h^N_{f_0})^2 =\eins_{N}\frac{|f-f^{\ast}_{L,P}|}{2}= \eins_{(X_{-1} \triangle \lbrace f < 0 \rbrace)\cap N}$ we find with [Appendix, Lemma \ref{Appendix1}] that
\begin{align}\label{calc_VB_A_part1_1}
\begin{split}
&\E_P (h^N_{f_0})^2\\
 &={} \frac{1}{2} \int_N|f_0(x)-f^{\ast}_{L_N,P}(x)| \dxy{P_X}{x}\\
											&={}\frac{1}{2} \int_{N \cap \lbrace |2\n-1| \geq t \rbrace}|f_0(x)-f^{\ast}_{L_N,P}(x)| \dxy{P_X}{x}\\
											&\qquad+ \frac{1}{2} \int_{N \cap \lbrace |2\n-1| < t \rbrace}|f_0(x)-f^{\ast}_{L_N,P}(x)| \dxy{P_X}{x}\\
											&\Leq \frac{1}{2t} \int_{N \cap \lbrace |2\n-1| \geq t \rbrace}|2\n (x)-1||f_0(x)-f^{\ast}_{L_N,P}(x)| \dxy{P_X}{x}\\
											&\qquad+ P_X(\lbrace{ x \in N: |2\n(x)-1|<t \rbrace})\\
											&\Leq \frac{1}{2t} \int_N |2\n (x)-1||f_0(x)-f^{\ast}_{L_N,P}(x)| \dxy{P_X}{x}\\
											&\qquad+ P_X(\lbrace{ x \in N: |2\n(x)-1|<t \rbrace})\\
											&\Leq t^{-1}\E_P h^N_{f_0} + \min \lbrace P_X(N),P_X(\lbrace x \in X:|2\n(x)-1|<t \rbrace) \rbrace
											\end{split}
\end{align}
for all $t>0$. We turn our attention to the minimum and note that by the definition of $N$ we have 
\begin{align}\label{calc_P(A)}
P_X(N)\leq P_X(\lbrace \D_{\n}(x) \leq 3r \rbrace).
\end{align} 
For $x \in X$ with $|2\n(x) -1|<t$ by the definition of the lower control we conclude from
\begin{align*}
\frac{\D_{\n}^{\g}(x)}{c_{\text{LC}}} \leq |2\n(x) -1|<t.
\end{align*} 
that
\begin{align*}
\D_{\n}(x) \leq  (c_{\text{LC}}t)^{\frac{1}{\g}}
\end{align*}
and consequently
\begin{align}\label{calc_2eta-1}
\lbrace x \in X:|2\n(x)-1|<t \rbrace \subset \lbrace x \in X:\D_{\n}(x) \leq (c_{\text{LC}}t)^{\frac{1}{\g}} \rbrace.
\end{align}
Then, we find by (\ref{calc_P(A)}), (\ref{calc_2eta-1}) and by the definition of the margin exponent that
\begin{align}\label{calc_min}
\begin{split}
 &\min \lbrace P_X(N),P_X(\lbrace x \in X:|2\n(x)-1|<t \rbrace) \rbrace\\
											&\leq \min \lbrace P_X(\lbrace \D_{\n}(x) \leq 3r \rbrace),P_X(\lbrace x \in X:\D_{\n}(x) \leq (c_{\text{LC}}t)^{\frac{1}{\g}}  \rbrace) \rbrace\\
											&\leq\min \lbrace  (c_{\text{ME}}3r)^{\a},c_{\text{ME}}^{\a}(c_{\text{LC}}t)^{\frac{\a}{\g}}\rbrace.
\end{split}
\end{align}
Combining (\ref{calc_min}) with (\ref{calc_VB_A_part1_1}) we obtain
\begin{align}\label{ineq_right_hand_side}
\begin{split}
\E_P (h^N_{f_0} -\E_P h^N_{f_0})^2 &\leq t^{-1}\E_P h^N_{f_0} + \min \lbrace  (c_{\text{ME}}3r)^{\a},c_{\text{ME}}^{\a}(c_{\text{LC}}t)^{\frac{\a}{\g}}\rbrace\\
                                   &\leq t^{-1}\E_P h^N_{f_0} + c_{\text{ME}}^{\a}(c_{\text{LC}}t)^{\frac{\a}{\g}}.
\end{split}
\end{align}
Minimizing the right-hand side of (\ref{ineq_right_hand_side}) yields
\begin{align*}
\min_{t>0} \left( t^{-1}\E_P h^N_{f_0} +  c_{\text{ME}}^{\a}(c_{\text{LC}}t)^{\frac{\a}{\g}} \right)= c_2 \left(\E_P h^N_{f_0}\right)^{\frac{\a}{\a+\g}},
\end{align*}
where $c_2:=\frac{\a+\g}{\g}c_{\text{ME}}^{\frac{\a\g}{\a+\g}}\left( \frac{\g c_{\text{LC}}}{\a}\right)^{\frac{\a}{\a+\g}}$, such that with 
\begin{align}
V &:=\max\lbrace 1, c_2 \rbrace \label{def_V}
\end{align}
and (\ref{def_th}) we have 
\begin{align}\label{calc_var_bound}
\begin{split}
\E_P (h^N_{f_0})^2 \leq t^{-1}\E_P h^N_{f_0} + c_{\text{ME}}(c_{\g}t^{\frac{1}{\g}})^{\a}=c_2\left(\E_P h^N_{f_0}\right)^{\frac{\a}{\a+\g}}\leq V \left(\E_P h^N_{f_0}\right)^{\th}.
\end{split}
\end{align}
Note, that the definition of $V$ yields $V^{\frac{1}{2-\th}} \geq 1$. Since $h_D$ is an ERM over $\F$ for the loss $L_N$ due to Lemma \ref{lemma_erm_subset}, by using \cite[Theorem~7.2]{StCh08} we obtain for fixed $\t \geq 1$ and $n\geq 1$ that 
\begin{align}\label{ineq_bern_F}
\begin{split}
&\RP{L_N}{h_{D}}-\RPB{L_N}\\
&<6 (\RPxB{L_N}{\F}-\RPB{L_N})+ 4 \left( \frac{8V(\text{log}(|\F|+1) +\t)}{n}\right)^{\frac{1}{2-\th}}
\end{split}
\end{align}
holds with probability $P^n \geq 1-e^{-\t}$. In order to refine the right-hand side in (\ref{ineq_bern_F}), we establish a bound on the cardinality $|\F|=2^{|J_A|}$ and on the approximation error. To bound the mentioned cardinality we use the fact that $N$ is contained in a tube around the decision line, that is $\bigcup_{j\in J_N} A_j \subset \lbrace \D_{\n}(x)\leq 3r\rbrace$, see $(\ref{def_A})$. We remark that $3r \leq \d^{\ast}$ holds, where $\d^{\ast}$ is the constant from Lemma \ref{lemma_hausdorff}, since with assumption $(\ref{s_geq})$ we have
\begin{align*}
3r=3\tilde{c}_{\a,\g,d} \left( \frac{\t}{s^dn} \right)^{\frac{1-\th}{1+\g(2-\th)}}\leq 3\min\left\lbrace \frac{\d^{\ast}}{3},1\right\rbrace \leq \d^{\ast}.
\end{align*}
Then, with Lemma \ref{lemma_hausdorff} we find that
\begin{align*}
\lb^d(\lbrace \D_{\n}(x)\leq 3r \rbrace) \leq 12 \mathcal{H}^{d-1}(\lbrace \n = 1/2 \rbrace) r
\end{align*}
and we obtain
\begin{align*}
|J_A|s^d = \lb^d \left( \bigcup_{j\in J_A} A_j \right) \leq \lb^d(\lbrace \D_{\n}(x)\leq 3r \rbrace) \leq 12 \mathcal{H}^{d-1}(\lbrace \n = 1/2 \rbrace) r.
\end{align*}
This yields to
\begin{align*}
|J_A| \leq 12 \mathcal{H}^{d-1}(\lbrace \n = 1/2 \rbrace) r s^{-d} = c_3 r  s^{-d},
\end{align*}
where $c_3:=12 \mathcal{H}^{d-1}(\lbrace \n = 1/2 \rbrace)$. By $r\geq s \geq s^d$ we hence conclude that 
\begin{align}\label{calc_log_bound}
\begin{split}
\text{log}(|\F|+1) &\leq \text{log}(2^{c_3 r  s^{-d}}+1)\\
				  &\leq \text{log}(2 \mycdot 2^{c_3 r  s^{-d}})\\
				  &= \text{log}(2^{c_3 r  s^{-d}+1})\\
				  &= (c_3 r  s^{-d}+1)\text{log}(2)\\
				  &\leq c_3 r  s^{-d}+r  s^{-d}\\
				  &\leq c_4 r  s^{-d},
 \end{split}
\end{align}
where $c_4:=2\max \lbrace 12\mathcal{H}^{d-1}(\lbrace \n = 1/2 \rbrace),1\rbrace$. Thus, (\ref{ineq_bern_F}) changes to  
\begin{align}\label{ineq_oracle_case1_before_approx}
\RP{L_N}{h_{D}}-\RPB{L_N}&\leq 6 (\RPxB{L_N}{\F}-\RPB{L_N})+ 4 \left( \frac{8V(c_4rs^{-d} +\t)}{n}\right)^{\frac{1}{2-\th}}
\end{align}
with probability $P^n \geq 1-e^{-\t}$. 

Finally, we have to bound the \textit{approximation error} $\RPxB{L_N}{\F}-\RPB{L_N}$ in (\ref{ineq_oracle_case1_before_approx}). For $f_0=h_{P,s}$ we have with Lemma \ref{Appendix1} that
\begin{align*}
\RP{L_N}{h_{P,s}}-\RPB{L_N}&= \int_{(X_1 \triangle \lbrace h_{P,s}\geq 0 \rbrace)\cap N }|2\n-1|\,dP_X\\
&=\sum_{j \in J_N^r} \int_{(X_1 \triangle \lbrace h_{P,s}\geq 0 \rbrace)\cap A_j}|2\n-1|\,dP_X.
\end{align*} 
We split $J_N^r$ in indices where cells do not intersect the decision line and those which do by 
\begin{align*}
J_{N_1}^r &:= \lbrace \,j \in J_N^r\,|\, P_X(A_j \cap X_1)=0 \vee P_X(A_j \cap X_{-1}) =0 \, \rbrace\\
J_{N_2}^r &:= \lbrace \,j \in J_N^r \,|\, P_X(A_j \cap X_1)>0 \wedge P_X(A_j \cap X_{-1}) >0 \, \rbrace.
\end{align*}
such that
\begin{align*}
&\sum_{j \in J_N^r} \int_{(X_1 \triangle \lbrace h_{P,s}\geq 0 \rbrace)\cap A_j}|2\n-1|\,dP_X\\
&=\sum_{j \in J_{N_1}^r} \int_{(X_1 \triangle \lbrace h_{P,s}\geq 0 \rbrace)\cap A_j}|2\n-1|\,dP_X\\
&\qquad+\sum_{j \in J_{N_2}^r} \int_{(X_1 \triangle \lbrace h_{P,s}\geq 0 \rbrace)\cap A_j}|2\n-1|\,dP_X.
\end{align*} 

We notice that, as in the calculation of the approximation error in Part 1, the first sum vanishes since $P_X((X_1 \triangle \lbrace h_{P,s}\geq 0 \rbrace)\cap A_j)=0$ for all $j \in J_{N_1}^r$. Moreover, we remark that $J_{N_2}^r$ only contains cells of width $s$ that intersect the decision boundary. Hence, by using the margin-noise assumption we find
\begin{align}\label{calc_approximation_errror_A}
\begin{split}
\RP{L_N}{h_{P,s}}-\RPB{L_N}&= \sum_{j \in J_{N_2}^r} \int_{(X_1 \triangle \lbrace h_{P,s}\geq 0 \rbrace)\cap A_j}|2\n-1|\,dP_X\\
&\leq \int_{\lbrace \D_{\n}(x)\leq s \rbrace} |2\n-1|\,dP_X\\
&\leq \left(c_{\text{MNE}}s \right)^{\b}.
\end{split}
\end{align}
Altogether for the oracle inequality on N with (\ref{ineq_oracle_case1_before_approx}) we find that
\begin{align}\label{ineq_oracle_A_case1}
\RP{L_N}{h_{D}}-\RPB{L_N}\leq 6\left(c_{\text{MNE}}s \right)^{\b}+ 4 \left( \frac{8V(c_4rs^{-d} +\t)}{n}\right)^{\frac{1}{2-\th}}
\end{align}
holds with probability $P^n \geq 1-e^{-\t}$.

\textit{Part 3:} 
In the last part we combine the results obtained in Part 1, the oracle inequality on $F$ and Part 2, the oracle inequality on $N$. That means, with the separation in (\ref{risksplit}) we obtain with (\ref{ineq_oracle_B}) and (\ref{ineq_oracle_A_case1}) for the oracle inequality on $X$ that
\begin{align}\label{ineq_oracle_all_case1}
\begin{split}
&\RP{L}{h_{D,s}}-\RPB{L}\\
&\leq \left( \RP{L_N}{h_{D,s}} -\RPB{L_N}\right)+ \left( \RP{L_F}{h_{D,s}}-\RPB{L_F} \right)\\
&\leq  6\left(c_{\text{MNE}}s \right)^{\b}+ 4 \left( \frac{8V(c_4rs^{-d} +\t)}{n}\right)^{\frac{1}{2-\th}} +\frac{32 c_1 ( 8^{d+1}s^{-d}+\t)}{r^{\g}n}
\end{split}
\end{align}
holds with probability $P^n \geq 1-2e^{-\t}$. Since $s\in (0,1]$ and $r \geq s$, we find that $rs^{-d} \geq 1$. Together with the fact $s^{-d}, \tau \geq 1$ and $c_4 \geq 1$ it follows that 
\begin{align*}
&\RP{L}{h_{D,s}}-\RPB{L}\\
 &\leq 6\left(c_{\text{MNE}}s \right)^{\b}+ 4 \left( \frac{8V(c_4rs^{-d} +\t)}{n}\right)^{\frac{1}{2-\th}} +\frac{32 c_1 ( 8^{d+1}s^{-d}+\t)}{r^{\g}n}\\
&\leq 6\left(c_{\text{MNE}}s \right)^{\b}+ 4 \left( \frac{8V (c_4\t rs^{-d} +c_4\t rs^{-d})}{n}\right)^{\frac{1}{2-\th}}+\frac{32 c_1 ( 8^{d+1}\t s^{-d}+\t s^{-d})}{r^{\g}n}\\
&\leq 6\left(c_{\text{MNE}}s \right)^{\b}+ 4 \left( \frac{c_5 \t r s^{-d}}{n}\right)^{\frac{1}{2-\th}}+\frac{c_6 \t s^{-d}}{r^{\g} n}\\
&\leq 6\left(c_{\text{MNE}}s \right)^{\b}+ r^{\frac{1}{2-\th}} 4\left( \frac{ c_5\t }{s^{d}n}\right)^{\frac{1}{2-\th}}+\frac{c_6\t }{r^{\g} s^{d}n},
\end{align*}
where $c_5:=32 V \max \lbrace 12\mathcal{H}^{d-1}(\lbrace \n = 1/2 \rbrace),1\rbrace$ and $c_6:=64\cdot 8^{d+1}\max\lbrace c_{LC},2^{\g}\rbrace$. Thus, inserting $r$, defined in (\ref{def_r}), with the choice of $\tilde{c}_{\a,\g,d}:=\\ \left(\frac{(\g(2-\th)c_6)^{2-\th}}{4^{2-\th}  c_5}\right)^{\frac{1}{1+\g(2-\th)}}$ minimizes the right-hand side and yields 
\begin{align*}
\begin{split}
&\RP{L}{h_{D,s}}-\RPB{L}\\
&\leq 6\left(c_{\text{MNE}}s \right)^{\b}+ r^{\frac{1}{2-\th}} 4\left( \frac{ c_5\t }{s^{d}n}\right)^{\frac{1}{2-\th}}+\frac{c_6\t }{r^{\g} s^{d}n}\\
&= 6\left(c_{\text{MNE}}s \right)^{\b}+ 4(\tilde{c}_{\a,\g,d}c_5)^{\frac{1}{2-\th}}\left( \frac{\t}{s^dn} \right)^{\frac{2-\th+\g(2-\th)}{(1+\g(2-\th))(2-\th)}}+\frac{c_6}{\tilde{c}_{\a,\g,d}^{\g}}\left(\frac{\t}{s^dn} \right)^{\frac{1+\g}{1+\g(2-\th)}}\\
&=6\left(c_{\text{MNE}}s \right)^{\b}+ \left(\frac{\tilde{c}_{\a,\g,d}^\frac{1+\g(2-\th)}{2-\th}4c_5^\frac{1}{2-\th}+c_6}{\tilde{c}_{\a,\g,d}^{\g}} \right)\left(\frac{\t}{s^dn} \right)^{\frac{1+\g}{1+\g(2-\th)}}\\
&=6\left(c_{\text{MNE}}s \right)^{\b}+ \left(\frac{\g(2-\th)c_6+c_6}{\tilde{c}_{\a,\g,d}^{\g}} \right)\left(\frac{\t}{s^dn} \right)^{\frac{1+\g}{1+\g(2-\th)}}\\
&\leq 6\left(c_{\text{MNE}}s \right)^{\b}+ \left(\frac{2c_6 \max\lbrace\g(2-\th),1\rbrace}{\tilde{c}_{\a,\g,d}^{\g}} \right)\left(\frac{\t}{s^dn} \right)^{\frac{1+\g}{1+\g(2-\th)}}
\end{split}
\end{align*}
and we find again by inserting $\th$ that
\begin{align}\label{oracle_final}
\RP{L}{h_{D,s}}-\RPB{L}&\leq 6\left(c_{\text{MNE}}s \right)^{\b}+ c_{\a,\g,d} \left( \frac{\t}{s^dn} \right)^{\frac{ (1+\g)(\a+\g)}{(1+\g)(\a+\g)+\g^2}}
\end{align}
holds with probability $P^n \geq 1-2e^{-\t}$, where $c_{\a,\g,d}:=\frac{2c_6 \max\lbrace\g(2-\th),1\rbrace}{\tilde{c}_{\a,\g,d}^{\g}}= \frac{2c_6 \max\left\lbrace \frac{\g(\a+2\g)}{\a+\g},1\right\rbrace}{\tilde{c}_{\a,\g,d}^{\g}}$.
\end{proofof}

\begin{proofof}{Theorem \ref{Hist_theorem_learning_rate}}
We begin by proving that the chosen sequence $s_n$ satisfies assumptions (\ref{s_leq}) and (\ref{s_geq}). To this end, we define $n_{\t,\a,\b,\g,d}:=\left( \frac{\tilde{c}_{\a,\b,\g,\t,d}}{c_1} \right)^{\frac{1}{\z_1}}$ with $c_1:=\tilde{c}_{\a,\g,d}^{\frac{\k+\g^2}{\k+\g^2+d\g}} \t^{\frac{\g}{\k+\g^2+d\g}}$, where $\tilde{c}_{\a,\g,d}
$  is the constant from Theorem \ref{Hist_theorem_oracle_inequality}, and $\z_1:=\frac{\k(\k+\g^2+d\g)-\g(\b(\k+\g^ 2)+d\k)}{(\b(\k+\g^ 2)+d\k)(\k+\g^2+d\g)}$. We remark that $\z_1 \geq 0$ since we find by  $\b \leq \g^{-1}(1+\g)(\a+\g)$ that 
\begin{align*}
\k(\k+\g^2+d\g)-\g(\b(\k+\g^ 2)+d\k)&={}\k^2+k\g^2-\g\b\k-\b\g^3\\
                                    &\Geq \k^2+\k\g^2-\k^2-\k\g^2\\
                                    &={}0.
\end{align*} 
Then, for $n \geq n_{\t,\a,\b,\g,d}$ a simple calculation shows that the latter is equivalent to
\begin{align*}
c_1 n^{-\frac{\g}{\k+\g^2+d\g}} \geq  \tilde{c}_{\a,\b,\g,\t,d} n^{-\frac{\k}{\b(\k+\g^2)+d\k}},
\end{align*}
which equals assumption (\ref{s_leq}) with $s_n:= \tilde{c}_{\a,\b,\g,\t,d} n^{-\frac{\k}{\b(\k+\g^2)+d\k}}$. To see that assumption (\ref{s_geq}) is satisfied we define $ \tilde{n}_{\t,\a,\b,\g,d}:=\left( \frac{c_2}{\tilde{c}_{\a,\b,\g,\t,d}}\right)^{\frac{1}{\z_2}}$ with 
$c_2:=\t^{\frac{1}{d}} \left( \frac{ \tilde{c}_{\a,\g,d}}{\min\lbrace \frac{\d^{\ast}}{3},1\rbrace} \right)^{\frac{\k+\g^2}{d\g}}$, where $\tilde{c}_{\a,\g,d}$ is the constant from Theorem \ref{Hist_theorem_oracle_inequality}, $\d^{\ast}$ the one from Lemma \ref{lemma_hausdorff} and where $\z_2:=\frac{\b(\k+\g^2)}{d(\b(\k+\g^2)+d\k)}$.
Then, a simple transformation shows again that for all $n \geq \tilde{n}_{\t,\a,\b,\g,d}$ we find
\begin{align*}
\tilde{c}_{\a,\b,\g,\t,d} n^{-\frac{\k}{(\b(\k+\g^2)+d\k)}} \geq c_2n^{-\frac{1}{d}},
\end{align*}
which equals assumption (\ref{s_geq}) with $s_n:=\tilde{c}_{\a,\b,\g,\t,d} n^{-\frac{\k}{(\b(\k+\g^2)+d\k)}}$.

Finally, we obtain for all $n \geq n_0:=\lceil\max \lbrace n_{\t,\a,\b,\g,d},\tilde{n}_{\t,\a,\b,\g,d}\rbrace \rceil$ by inserting our chosen sequence $s_n$, satisfying (\ref{s_leq}) and (\ref{s_geq}), in (\ref{oracle_theorem}) that
\begin{align*}
&\RP{L}{h_{D,s_n}}-\RPB{L}\\
&\leq 6(c_{\text{MNE}}s_n)^{\b}+ c_{\a,\g,d} \left( \frac{\t}{s_n^dn} \right)^{\frac{ \k}{\k+\g^2}}\\
&= 6c_{\text{MNE}}^{\b} \tilde{c}_{\a,\b,\g,\t,d}^{\b}n^{-\frac{\b\k}{\b(\k+\g^2)+d\k}}
+c_{\a,\g,d}\t^{\frac{\k}{\k+\g^2}} \tilde{c}_{\a,\b,\g,\t,d}^{-\frac{d\k}{\k+\g^2}}n^{-\frac{\b\k}{\b(\k+\g^2)+d\k}}
\\
&=\left(\frac{6c_{\text{MNE}}^{\b} \tilde{c}_{\a,\b,\g,\t,d}^{\frac{\b(\k+\g^2)+d\k}{\k+\g^2}}+ c_{\a,\g,d}\t^{\frac{\k}{\k+\g^2}}  }{\tilde{c}_{\a,\b,\g,\t,d}^{\frac{d\k}{\k+\g^2}}}\right) n^{-\frac{\b\k}{\b(\k+\g^ 2)+d\k}}\\
&=  \left(\frac{ \frac{d\k}{\b(\k+\g^2)}c_{\a,\g,d}\t^{\frac{\k}{\k+\g^2}} + c_{\a,\g,d}\t^{\frac{\k}{\k+\g^2}} }{\tilde{c}_{\a,\b,\g,\t,d}^{\frac{d\k}{\k+\g^2}}}\right)  n^{-\frac{\b\k}{\b(\k+\g^2)+d\k}}\\
&\leq \left( \frac{2\max\left\lbrace \frac{d\k}{\b(\k+\g^2)},1\right\rbrace c_{\a,\g,\d}\t^{\frac{\k}{\k+\g^2}}}{\tilde{c}_{\a,\b,\g,\t,d}^{  \frac{d\k}{\k+\g^2}}}\right) n^{-\frac{\b\k}{\b(\k+\g^2)+d\k}}\\
&=c_{\a,\b,\g,\t,d}n^{-\frac{\b\k}{\b(\k+\g^2)+d\k}}
\end{align*}
holds with probability $P^n \geq 1-2e^{-\t}$, where $c_{\a,\b,\g,\t,d}:=\\ 2\max\left\lbrace \frac{d\k}{\b(\k+\g^2)},1\right\rbrace c_{\a,\g,\d}\t^{\frac{\k}{\k+\g^2}} \cdot \tilde{c}_{\a,\b,\g,\t,d}^{-\frac{d\k}{\k+\g^2}}$.\qedhere
\end{proofof}

\begin{proofof}{Theorem \ref{theorem_learning_rate_TV}}
Let $s_n^{\ast}$ behave as $s_n$ in Theorem \ref{Hist_theorem_learning_rate}, that is
$s_n^{\ast}\sim n^{-\frac{\k}{\b(\k+\g^2)+d\k}}$.
We assume that $S_n:=\lbrace s_1^{(n)}, \ldots, s_l^{(n)} \rbrace$ and $s_{i-1}^{(n)}<s_i^{(n)}$ for $i\in \lbrace 2, \ldots ,l \rbrace$. Since $S_n$ is a $n^{-1/d}$-net we have 
\begin{align}\label{s_i}
s_i^{(n)}-s_{i-1}^{(n)}\leq 2n^{-1/d}.
\end{align} 
Furthermore, there exists indices $i \in \lbrace 1, \ldots ,l \rbrace$ such that $s_{i-1}^{(n)} \leq s_n^{\ast} \leq s_i^{(n)}$.
An analogous calculation as at the beginning of the proof of Theorem \ref{Hist_theorem_learning_rate} shows then that $s_{i-1}^{(n)}$ and $s_i^{(n)}$ satisfy assumption (\ref{s_leq}) and (\ref{s_geq}) for sufficiently large $n$.
%
Hence, we find for $s \in \lbrace s_{i-1}^{(n)},s_i^{(n)} \rbrace $ with Theorem \ref{Hist_theorem_oracle_inequality}
that 
\begin{align}\label{ineq_oracle1_TV}
\RP{L}{h_{D_1,s}}-\RPB{L} \leq  6\left(c_{\text{MNE}}s \right)^{\b}+ c_{\a,\g,d} \left( \frac{\t}{s^dk} \right)^{\frac{\k}{\k+\g^2}}.
\end{align}
holds with $P^k \geq 1-4e^{-\t}$.
Since $h_{D_1,s^{\ast}_{D_2}}$ is an ERM we find with \cite[Theorem~7.2, Theorem~8.24 and Exercise~8.5]{StCh08} and $\t_n:=\t+\log(1+2|S_n|)$ that
\begin{align}\label{ineq_erm_TV}
\begin{split}
&\RP{L}{h_{D_1,s^{\ast}_{D_2}}}-\RPB{L}\\
&< 6 \inf_{s \in S_n} \left(\RP{L}{h_{D_1,s}}-\RPB{L}\right)+4\left(\frac{8c_{\a,\g}\t_n}{n-k} \right)^{\frac{\a+\g}{\a+2\g}}\\
&\leq \inf_{s \in  \lbrace s_{i-1}^{(n)},s_i^{(n)} \rbrace} \left(\RP{L}{h_{D_1,s}}-\RPB{L}\right)+4\left(\frac{8c_{\a,\g}\t_n}{n-k} \right)^{\frac{\a+\g}{\a+2\g}}
\end{split}
\end{align}
holds with $P^{n-k}\geq 1-e^{-\t}$. Combining (\ref{ineq_oracle1_TV}) and (\ref{ineq_erm_TV}) we obtain with $k\geq n/2$ and $n-k = n/2+n/2-k \geq n/4$  that
\begin{align*}
&\RP{L}{h_{D_1,s^{\ast}_{D_2}}}-\RPB{L}\\
&\leq 6 \inf_{s \in \lbrace s_{i-1}^{(n)},s_i^{(n)} \rbrace} \left(6\left(c_{\text{MNE}}s \right)^{\b}+ c_{\a,\g,d} \left( \frac{2\t}{s^dn} \right)^{\frac{\k}{\k+\g^2}} \right)+4\left(\frac{32c_{\a,\g}\t_n}{n} \right)^{\frac{\a+\g}{\a+2\g}}\\
&\leq c_1 \left( \inf_{s \in \lbrace s_{i-1}^{(n)},s_i^{(n)} \rbrace} \left(s^{\b}+ \left( \frac{2\t}{s^dn} \right)^{\frac{\k}{\k+\g^2}}\right)+\left(\frac{\t_n}{n} \right)^{\frac{\a+\g}{\a+2\g}} \right)
\end{align*}
holds with $P^n \geq 1-(1+4)e^{-\t}$. With Lemma \ref{Appendix3} we find that
\begin{align}\label{ineq_cons_TV}
\begin{split}
&\RP{L}{h_{D_1,s^{\ast}_{D_2}}}-\RPB{L}\\
&\leq c_1 \left( \inf_{s \in \lbrace s_{i-1}^{(n)},s_i^{(n)} \rbrace} \left(s^{\b}+ \left( \frac{2\t}{s^dn} \right)^{\frac{\k}{\k+\g^2}}\right)+\left(\frac{\t_n}{n} \right)^{\frac{\a+\g}{\a+2\g}} \right)\\
&\leq  c_2 \left(  \left(s_n^{\ast}\right)^{\b}+ \left( \frac{2\t}{\left(s_n^{\ast}\right)^dn} \right)^{\frac{\k}{\k+\g^2}}+\left(\frac{\t_n}{n} \right)^{\frac{\a+\g}{\a+2\g}}  +n^{-\frac{\b}{d}}  \right)\\
&\leq c_2 \left( \left(s_n^{\ast}\right)^{\b}+ \left( \frac{2\t_n}{\left(s_n^{\ast}\right)^dn} \right)^{\frac{\k}{\k+\g^2}}+\left(\frac{2\t_n}{n} \right)^{\frac{\a+\g}{\a+2\g}} +n^{-\frac{\b}{d}}  \right)
\end{split}
\end{align}
holds with $P^n \geq 1-5e^{-\t}$. 
Next, it is easy to verify with $\b \leq \g^{-1}\k$ that
\begin{align*}
\left( \frac{2\t_n}{\tilde{s}_n^d n} \right)^{\frac{\k}{\k+\g^2}}  \geq \left(\frac{2\t_n}{n} \right)^{\frac{\a+\g}{\a+2\g}}
\end{align*}
such that we can omit the latter right-hand side term in (\ref{ineq_cons_TV}). Hence, we have that 
\begin{align*}
&\RP{L}{h_{D_1,s^{\ast}_{D_2}}}-\RPB{L}\\
&\leq c_2 \left(  \left(s_n^{\ast}\right)^{\b}+ \left( \frac{2\t_n}{\left(s_n^{\ast}\right)^dn} \right)^{\frac{ (1+\g)(\a+\g)}{(1+\g)(\a+\g)+\g^2}}+\left(\frac{2\t_n}{n} \right)^{\frac{\a+\g}{\a+2\g}} +n^{-\frac{\b}{d}} \right)\\
&\leq c_3  \left(\left(s_n^{\ast}\right)^{\b}+ \left( \frac{2\t_n}{\left(s_n^{\ast}\right)^dn} \right)^{\frac{ (1+\g)(\a+\g)}{(1+\g)(\a+\g)+\g^2}} +n^{-\frac{\b}{d}}\right)\\
&\leq c_3 \left( n^{-\frac{\b\k}{\b(\k+\g^2)+d\k}}+n^{-\frac{\b}{d}} \right)\\
&\leq c_4 \cdot n^{-\frac{\b\k}{\b(\k+\g^2)+d\k}} 
\end{align*}
holds with $P^n \geq 1-5e^{-\t}$, where in the last step we used that 
\begin{align*}
\frac{\b\k}{\b(\k+\g^2)+d\k} \leq \frac{\b\k}{d\k} = \frac{\b}{d}.
\end{align*}  
\end{proofof}

\appendix

\section{Appendix}

\begin{lemma}\label{Appendix1}
Let $Y:=\lbrace -1,1 \rbrace$ and $P$ be a probability measure on $X \times Y$. For $\n(x):=P(y=1|x), x \in X$ define the set $X_{1}:=\set{x \in X}{\n(x)>1/2}$. Let $L$ be the classification loss and consider for $A \subset X$ the loss $L_A(x,y,t):=\eins_{A}(x)L(y,t)$, where $y\in Y, t \in \mathbb{R}$. For a measurable $f \colon X \to \R$ we then have
\begin{align*}
\RP{L_A}{f}-\RPB{L_A} &= \int_{(X_1\triangle \lbrace f \geq 0 \rbrace)\cap A} |2\n(x)-1| \dxy{P_X}{x},
\end{align*}
where $\triangle$ denotes the symmetric difference.
\end{lemma}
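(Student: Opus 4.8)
The plan is to reduce the identity to a pointwise computation of the inner (conditional) risk, which is the standard route for excess classification risk. First I would disintegrate $P$ as $dP(x,y)=P(dy\mid x)\,dP_X(x)$ and, using $L_A(x,y,t)=\eins_A(x)L(y,t)$, write for every measurable $g\colon X\to\mathbb{R}$
\begin{align*}
\RP{L_A}{g}=\int_A \Bigl(\int_Y L(y,g(x))\,P(dy\mid x)\Bigr)\dxy{P_X}{x}.
\end{align*}
Since the inner integral is nonnegative and depends on $g$ only through the value $g(x)$, the infimum defining $\RPB{L_A}$ is obtained by minimising the inner integral separately at each $x\in A$; in particular the factor $\eins_A$ does not change the minimiser, so $f^{\ast}_{L,P}=\text{sign}(2\n-1)$ remains a Bayes decision function for $L_A$ and $\RPB{L_A}=\int_A\min\{\n(x),1-\n(x)\}\dxy{P_X}{x}$.

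The second step is the elementary evaluation of the inner risk. From $L(y,t)=\eins_{(-\infty,0]}(y\,\text{sign}\,t)$ together with the convention $\text{sign}\,0=1$ (the one under which both the stated form of $f^{\ast}_{L,P}$ and the identity $h_{D,s}=\text{sign}f_{D,s}$ are consistent with the earlier definitions), one gets for $t=f(x)$ that $\int_Y L(y,f(x))\,P(dy\mid x)$ equals $1-\n(x)$ if $f(x)\geq 0$ and equals $\n(x)$ if $f(x)<0$. Subtracting $\min\{\n(x),1-\n(x)\}$ leaves the conditional excess risk at $x$ equal to $1-\n(x)-\min\{\n(x),1-\n(x)\}$ when $f(x)\geq 0$ and to $\n(x)-\min\{\n(x),1-\n(x)\}$ when $f(x)<0$.

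The third step is a short case distinction over the two signs of $f(x)$ and the three possibilities $\n(x)>1/2$, $\n(x)=1/2$, $\n(x)<1/2$. It shows that the conditional excess risk vanishes whenever $x\notin\{f\geq 0\}\triangle X_1$, and that on $\{f\geq 0\}\triangle X_1$ it equals $1-2\min\{\n(x),1-\n(x)\}=|2\n(x)-1|$ except on $X_0=\{\n=1/2\}$, where $|2\n(x)-1|$ vanishes anyway. Hence the conditional excess risk equals $|2\n(x)-1|\,\eins_{X_1\triangle\{f\geq 0\}}(x)$ for every $x\in X$, and integrating this identity over $A$ against $P_X$ yields
\begin{align*}
\RP{L_A}{f}-\RPB{L_A}=\int_{(X_1\triangle\{f\geq 0\})\cap A}|2\n(x)-1|\dxy{P_X}{x},
\end{align*}
which is the claim.

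I do not expect a genuine obstacle here. The only point needing care is the boundary set $X_0$: whether a point $x\in X_0$ belongs to $\{f\geq 0\}\triangle X_1$ depends on the sign chosen for $f(x)$ there, but since $|2\n(x)-1|=0$ on $X_0$ this set contributes nothing to either side and needs no special treatment. One should also keep the $\text{sign}\,0=1$ convention fixed throughout, so that the error set appearing in the formula is $\{f\geq 0\}\triangle X_1$ rather than $\{f>0\}\triangle X_1$.
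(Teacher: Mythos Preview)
Your proof is correct and follows essentially the same approach as the paper: both compute the excess risk pointwise and identify the set where the conditional excess risk is nonzero as $X_1\triangle\{f\geq 0\}$, with the $\n=1/2$ boundary handled by noting $|2\n-1|=0$ there. The only difference is that the paper invokes \cite[Example~3.8]{StCh08} for the integral formula $\int_A|2\n-1|\,\eins_{(-\infty,0)}((2\n-1)\,\text{sign}f)\,dP_X$ and then interprets the indicator, whereas you derive this from scratch via the conditional risk $\min\{\n,1-\n\}$; your version is more self-contained but otherwise the same argument.
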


\begin{proof}[Proof of Lemma \ref{Appendix1}]
It is well known, e.g., \cite[Example~3.8]{StCh08}, that 
\begin{align}\label{Appendix_ueberschuss}
\begin{split}
&\RP{L_A}{f}-\RPB{L_A}\\&= \int_A  |2\n(x)-1| \cdot \eins_{(-\infty,0)} ((2\n(x)-1)\text{sign}f(x)) \dxy{P_X}{x}.
\end{split}
\end{align}
Next, for $P_X$-almost all $x \in  A$ we have
\begin{align*}
\eins_{(-\infty,0]} ((2\n(x)-1)\text{sign}f(x)) = 1 \Leftrightarrow (2\n(x)-1)\text{sign}f(x) \leq 0.
\end{align*}
The latter is true if for $x \in A$ holds that $f(x) < 0$ and $\n(x)> 1/2 $ or that $f(x) \geq 0$ and $\n(x)\leq 1/2 $ or that $\n(x)=1/2$. However, for $\n(x)=1/2$ we have $|2\n(x)-1|=0$ and hence this case can be ignored. Then, the latter obviously equals the set $(X_1\triangle \lbrace f \geq 0 \rbrace)\cap A$ and we obtain in (\ref{Appendix_ueberschuss})
\begin{align*}
\RP{L_A}{f}-\RPB{L_A} &= \int_{(X_1\triangle \lbrace f \geq 0 \rbrace)\cap A } |2\n(x)-1| \dxy{P_X}{x}.
\tag*{\qedhere}
\end{align*}
\end{proof}


%

%

\begin{lemma}\label{Appendix2}
Let $X:=\left[-1,1 \right]^d$ and $P$ be a probability measure on $X \times \{-1,1\}$ with fixed version $\n \colon X \to [0,1]$ of its posterior probability. Then, if $\n$ is H\"older-continuous with exponent $\g$, we have that $\D_{\n}$ controls the noise from above with exponent $\g$, that means there exists a constant 
$c_{\text{UC}}>0$ such that
\begin{align*}
|2\n(x) -1| \leq c_{\text{UC}} \D_{\n}^{\g}(x)
\end{align*}
for $P_X$-almost all $x \in X$.
\end{lemma}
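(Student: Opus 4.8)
The plan is to read the estimate off directly from the H\"older continuity of $\n$, exploiting the elementary fact that $2\n-1$ has a fixed sign on each of $X_1$ and $X_{-1}$. First I would fix a H\"older constant $c_H>0$ with $|\n(x)-\n(x')|\le c_H\, d(x,x')^{\g}$ for all $x,x'\in X$, and set $c_{\text{UC}}:=2c_H$. The case $x\in X_0$ is immediate, since then $|2\n(x)-1|=0$ and $\D_{\n}(x)=0$; so one reduces at once to $x\in X_1\cup X_{-1}$, and by the symmetry $X_1\leftrightarrow X_{-1}$, $\n\leftrightarrow 1-\n$ it is enough to treat $x\in X_1$.

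For $x\in X_1$ one has $\D_{\n}(x)=d(x,X_{-1})$, and the bound is vacuous if $X_{-1}=\emptyset$ because then $\D_{\n}(x)=\infty$. The key step is that $2\n(x')-1<0$ for every $x'\in X_{-1}$, which lets the ``wrong-sign'' term be dropped:
\begin{align*}
0<2\n(x)-1=2\bigl(\n(x)-\n(x')\bigr)+\bigl(2\n(x')-1\bigr)\le 2\bigl(\n(x)-\n(x')\bigr)\le 2c_H\, d(x,x')^{\g}.
\end{align*}
Taking the infimum over $x'\in X_{-1}$ and using that $t\mapsto t^{\g}$ is continuous and nondecreasing on $[0,\infty)$, so that the infimum passes through the power, the right-hand side becomes $2c_H\bigl(d(x,X_{-1})\bigr)^{\g}=c_{\text{UC}}\D_{\n}^{\g}(x)$, and hence $|2\n(x)-1|\le c_{\text{UC}}\D_{\n}^{\g}(x)$. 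Running the symmetric argument for $x\in X_{-1}$ with $X_1$ in place of $X_{-1}$, and observing that the whole computation was carried out for the fixed version $\n$ at an arbitrary point of $X$, the inequality holds in particular for $P_X$-almost all $x$.

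There is essentially no obstacle here: the only spots that deserve a word of justification are the degenerate values $\D_{\n}(x)\in\{0,\infty\}$ and the commutation of the infimum with $t\mapsto t^{\g}$, the latter being precisely where $\g>0$ is needed. If one wishes to stay closer to the geometry of the decision boundary, the same estimate can instead be run against points of $\overline{X_{-1}}$, or, using continuity of $\n$, against points where $\n=1/2$; this affects neither the argument nor the constant.
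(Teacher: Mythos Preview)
Your proof is correct and follows essentially the same approach as the paper: fix $x\in X_1$, use that $\n(x')<1/2$ for $x'\in X_{-1}$ to bound $|2\n(x)-1|\le 2|\n(x)-\n(x')|\le 2c_H\,d(x,x')^{\g}$, and then pass to the infimum over $x'\in X_{-1}$ with $c_{\text{UC}}=2c_H$. Your write-up is slightly more careful than the paper's in that you explicitly address the degenerate cases $X_{-1}=\emptyset$ and $x\in X_0$ and justify commuting the infimum with the power.
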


\begin{proof}[Proof of Lemma \ref{Appendix2}]
Fix w.l.o.g.\ an $x \in X_1$. Then, $\n(x)>1/2$. Since $\n$ is H\"older-continuous with exponent $\g$, there exists a constant $c>0$ such that we have 
\begin{align*}
|2\n(x)-1|=2|\n(x)-1/2|\leq 2|\n(x)-\n(x')|\leq 2c (d(x,x'))^{\g}
\end{align*}
for all $x'\in X_{-1}$ and hence
\begin{align*}
|2\n(x)-1| \leq 2c \inf_{\tilde{x}\in X_{-1}}(d(x,\tilde{x}))^{\g}=c_{\text{UC}} \D_{\n}^{\g}(x),
\end{align*}
where $c_{\text{UC}}:=2c$.
Obviously, the last inequality holds immediately for $x\in X$ with $\n(x)=1/2$.
\end{proof}

\begin{lemma}\label{Appendix3}
Let $\b,\g,\k,\t$ be as in Theorem \ref{Hist_theorem_learning_rate} and let $S_n:=\lbrace s_1^{(n)}, \ldots, s_l^{(n)} \rbrace$ and $s_{i-1}^{(n)},s_i^{(n)} \in S_n$ for $i \in \lbrace 1, \ldots ,l \rbrace$ be given as in the beginning of the proof of Theorem \ref{theorem_learning_rate_TV}. Furthermore, let $c_1,c_2>0$ be constants.  Then, we have 
\begin{align*}
&\inf_{s \in \lbrace{s_{i-1}^{(n)},s_i^{(n)} \rbrace}} \left(6\left(c_1 s \right)^{\b}+ c_2 \left( \frac{2\t}{s^dn} \right)^{\frac{\k}{\k+\g^2}}\right)\\&\leq  6\left(c_1s_n^{\ast}\right)^{\b}+ c_2 \left( \frac{2\t}{\left(s_n^{\ast}\right)^dn} \right)^{\frac{\k}{\k+\g^2}}  +6\left(\frac{2c_1}{n^{1/d}}\right)^{\b}.
\end{align*}
\end{lemma}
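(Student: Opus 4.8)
The plan is to bound the infimum by the value of the objective at the \emph{larger} of the two bracketing net points, namely at $s=s_i^{(n)}$, and to exploit that the two summands are monotone in opposite directions: $6(c_1 s)^\b$ is increasing in $s$, whereas $c_2\bigl(2\t/(s^dn)\bigr)^{\k/(\k+\g^2)}$ is decreasing in $s$.

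Concretely, since the index $i$ is chosen so that $s_{i-1}^{(n)}\le s_n^\ast\le s_i^{(n)}$, monotonicity of the stochastic term gives
\[
c_2\Bigl(\frac{2\t}{(s_i^{(n)})^dn}\Bigr)^{\k/(\k+\g^2)}\;\le\;c_2\Bigl(\frac{2\t}{(s_n^\ast)^dn}\Bigr)^{\k/(\k+\g^2)},
\]
which is already the second term on the right-hand side of the claim, so this half is free. It then remains only to control the growth of $6(c_1 s)^\b$ when passing from $s_n^\ast$ to $s_i^{(n)}$. Here I would invoke the net resolution in the form (\ref{s_i}): combining $s_{i-1}^{(n)}\le s_n^\ast$ with $s_i^{(n)}-s_{i-1}^{(n)}\le 2n^{-1/d}$ yields $s_i^{(n)}\le s_n^\ast+2n^{-1/d}$, hence $c_1 s_i^{(n)}\le c_1 s_n^\ast+2c_1n^{-1/d}$, and the subadditivity $(a+b)^\b\le a^\b+b^\b$ gives $6(c_1 s_i^{(n)})^\b\le 6(c_1 s_n^\ast)^\b+6(2c_1/n^{1/d})^\b$. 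Adding the two estimates yields the asserted bound.

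The only delicate point is the subadditivity step, which is valid for $\b\le 1$ but not in general, while under the hypotheses of Theorem~\ref{Hist_theorem_learning_rate} one may well have $\b>1$ (e.g.\ $\b=\a+\g$ in the examples of Section~\ref{section_comparison}). To cover that case I would split according to the size of $s_n^\ast$ relative to $n^{-1/d}$: since the exponent $\k/\bigl(\b(\k+\g^2)+d\k\bigr)$ is strictly below $1/d$, one has $s_n^\ast/n^{-1/d}\to\infty$, so for large $n$ the relative perturbation $2n^{-1/d}/s_n^\ast$ is small, and a first-order Taylor bound for $t\mapsto t^\b$ at $s_n^\ast$ (respectively for the corresponding power of $s$ when using the left endpoint $s_{i-1}^{(n)}$ to control the stochastic term) bounds each increment up to an absolute constant, which is all that the subsequent use of the lemma in Theorem~\ref{theorem_learning_rate_TV} requires; for the finitely many small $n$ both net points are $O(n^{-1/d})$ and the bound is immediate. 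Thus the main obstacle is not conceptual but bookkeeping: tracking the perturbation of the near-optimal width $s_n^\ast$ induced by the grid carefully enough that the stated constants — or constants of the same order — come out.
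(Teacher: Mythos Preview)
Your approach is essentially the paper's: evaluate the objective at the right bracketing point $s_i^{(n)}$, use that the stochastic term is decreasing in $s$ to replace $s_i^{(n)}$ by $s_n^\ast$ there, and use $s_i^{(n)}\le s_n^\ast+2n^{-1/d}$ together with $(a+b)^\b\le a^\b+b^\b$ for the approximation term. The only cosmetic difference is that the paper first passes through an auxiliary near-minimizer $\tilde s$ with slack $\d>0$ before carrying out the same two monotonicity/subadditivity steps; this detour is redundant (one can take $\tilde s=s_n^\ast$, $\d=0$) and you rightly skip it.

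Your caveat about $\b>1$ is well taken: the paper's proof also relies on the same subadditivity inequality (the extra term $6(2c_1/n^{1/d})^\b$ is silently split off between two displayed lines), so the issue you flag is present there as well. Your proposed patch---use that $s_n^\ast\gg n^{-1/d}$ for large $n$ so that $(s_n^\ast+2n^{-1/d})^\b\le C\,(s_n^\ast)^\b$ with a constant depending only on $\b$---is the standard fix and is entirely sufficient for the use of the lemma in Theorem~\ref{theorem_learning_rate_TV}, where only the rate matters. If one insists on the stated constants, the inequality as written need not hold for $\b>1$; up to a harmless factor $2^{\b-1}$ it does.
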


\begin{proof}[Proof of Lemma \ref{Appendix3}]
For $\d>0$ we fix $\tilde{s} \in (0,1]$ such that
\begin{align*}
6\left(c_1\tilde{s}\right)^{\b}+ c_2 \left( \frac{2\t}{\left(\tilde{s}\right)^dn} \right)^{\frac{\k}{\k+\g^2}}\leq  6\left(c_1s_n^{\ast}\right)^{\b}+ c_2 \left( \frac{2\t}{\left(s_n^{\ast}\right)^dn} \right)^{\frac{\k}{\k+\g^2}} + \d,
\end{align*}
where $s_n^{\ast}$ is given as in the proof of Theorem \ref{theorem_learning_rate_TV}. Then, we find that $s_{i-1}^{(n)} \leq \tilde{s} \leq s_i^{(n)}$ and  with (\ref{s_i}) that 
\begin{align*}
\tilde{s} \leq s_i^{(n)} \leq \tilde{s} +2n^{-1/d}.
\end{align*}
Hence,
\begin{align*}
& 6\left(c_1s_i^{(n)} \right)^{\b}+ c_2 \left( \frac{2\t}{\left(s_i^{(n)}\right)^dn} \right)^{\frac{\k}{\k+\g^2}}\\
&\leq 6\left(c_1(\tilde{s}+2n^{1/d}) \right)^{\b}+ c_2 \left( \frac{2\t}{\left(\tilde{s}\right)^d n} \right)^{\frac{\k}{\k+\g^2}}\\
&\leq 6\left(c_1\tilde{s}\right)^{\b}+ c_2 \left( \frac{2\t}{\left(\tilde{s}\right)^dn} \right)^{\frac{\k}{\k+\g^2}}\\
&\leq \left(c_1s_n^{\ast}\right)^{\b}+ c_2 \left( \frac{2\t}{\left(s_n^{\ast}\right)^dn} \right)^{\frac{\k}{\k+\g^2}}+\d  +6\left(\frac{2c_1}{n^{1/d}}\right)^{\b}
\end{align*}
and finally 
\begin{align*}
&\inf_{s \in \lbrace{s_{i-1}^{(n)},s_i^{(n)} \rbrace}} \left(6\left(c_1 s \right)^{\b}+ c_2 \left( \frac{2\t}{s^dn} \right)^{\frac{\k}{\k+\g^2}}\right)\\&\leq 6\left(c_1s_n^{\ast}\right)^{\b}+ c_2 \left( \frac{2\t}{\left(s_n^{\ast}\right)^dn} \right)^{\frac{\k}{\k+\g^2}}  +6\left(\frac{2c_1}{n^{1/d}}\right)^{\b}.
\end{align*}
\end{proof}


\bibliographystyle{imsart-nameyear}

\bibliography{ing_literature}

\end{document}